\newtheorem{remark}{Remark}[section]
\newtheorem{lemma}[remark]{Lemma}
\newtheorem{theorem}[remark]{Theorem}
\newtheorem{proposition}[remark]{Proposition}
\newtheorem{corollary}[remark]{Corollary}
\title{Italian domination in rooted product graphs}
\author{R. Hern\'andez-Ortiz$^{1}$, L. P. Montejano$^2$, J. A. Rodr{\'\i}guez-Vel\'azquez$^{2}$
\\
\\
$^1${\small Departamento de Matem\' aticas Aplicadas y Sistemas}\\{\small Universidad Aut\'onoma Metropolitana, Ciudad de M\' exico, Mexico}
\\
$^2${\small 
Universitat Rovira i Virgili, Departament d'Enginyeria
Inform\`{a}tica i Matem\`{a}tiques}\\
{\small
Av. Pa\"{i}sos Catalans 26, 43007
Tarragona, Spain}
\\{\small Email addresses: rangel@ciencias.unam.mx, luispedro.montejano@urv.cat, juanalberto.rodriguez@urv.cat}
}
\begin{document}
\maketitle 

\begin{abstract}
In this article, we obtain closed formulae for the Italian domination number of rooted  product graphs.
As a particular case of the study, we derive the corresponding formulas for corona graphs, and we provide an alternative proof that the 
problem of computing the Italian domination number of a graph is NP-hard.
\end{abstract}

{\it Keywords}:
 Italian domination; Rooted product graphs; NP-hard problem.
 
 {\it AMS Subject Classification numbers}:  	05C69;		05C76; 	68Q17.

\section{Introduction}
%Cockay\-ne et al.~\cite{MR2137919} introduced 
Consider the following approach to protecting a network. 
Suppose that one or more entities are stationed at some of the nodes of a network and that an entity at a node $v$ can deal with a problem produced in $v$ or in its neighbouring nodes. 
Depending on the nature of the network, an entity could consist of a robot, an observer, a spy, an intruder, a legion,  a guard,  and so on. Informally, we say that a network (or its underlying graph) is protected under a  placement of entities if there exists at least one entity available to handle a problem at any node.

Let $G$ be a simple graph whose vertex set is $V(G)$ and whose edge set is $E(G)$. Consider a function $f: V(G)\longrightarrow \{0,1,2\}$ where $f(v)$ denotes the number of entities stationed at vertex $v$. For every   $i\in \{0,1,2\}$ we define the sets $V_i=\{v\in V(G):\; f(v)=i\}$. We will identify the function $f$ with the partition of the vertex set induced by $f$ and, with this end,  we will write $f(V_0,V_1, V_2).$ The weight of $f$ is defined to be $$\omega(f)=f(V(G))=\sum_{v\in V(G)}f(v)=\sum_ii|V_i|.$$ 
  We now consider two particular strategies of graph protection; the so-called Roman domination and the so-called Italian domination. As we can expect, the minimum number of entities required for protection under each strategy is of interest.

Let $N(v)$ be the open neighbourhood of $v\in V(G)$. A function $f(V_0,V_1,V_2)$ is a \textit{Roman dominating function} (RDF) if  $N(v)\cap V_2\ne \emptyset$ for every vertex $v\in V_0$. The \textit{Roman domination number}, denoted by  $\gamma_R(G)$, is defined to be $$\gamma_R(G)=\min\{w(f):\, f \text{ is a RDF on } G\}.$$  
This strategy of graph protection was formally proposed by Cockayne et al.\ in \cite{Cockayne2004}. 
For simplicity, a Roman dominating function with minimum weight $\gamma_{_R}(G)$ on $G$ will be called a $\gamma_{_R}(G)$-\emph{function}. 

A generalization of Roman domination called Italian domination was introduced by Chella\-li et al.\ in \cite{CHELLALI201622}, where it was called Roman $\{2\}$-domination. The concept was stu\-died further in \cite{HENNING2017557,Klostermeyer201920}. An \emph{Italian dominating function} (IDF) on a graph $G$   is a
function $f(V_0,V_1,V_2)$ satisfying that $f(N(v))=\sum_{u\in N(v)}f(u)\ge 2$ for every $v\in V_0$, i.e., $f(V_0,V_1,V_2)$ is an IDF if $N(v)\cap V_2\ne \emptyset$ or $|N(v)\cap V_1|\ge 2$ for every $v\in V_0$.

The \textit{Italian domination number}, denoted by  $\gamma_{_I}(G)$, is is defined to be $$\gamma_{_I}(G)=\min\{w(f):\, f \text{ is an IDF on } G\}.$$  An Italian dominating function with minimum weight $\gamma_{_I}(G)$ on $G$ will be called a $\gamma_{_I}(G)$-\emph{function}.
We will assume a similar agreement when referring to the optimal functions (and sets) associated with other parameters defined below.

Since the problem of computing $\gamma_I(G)$ is NP-hard \cite{CHELLALI201622}, the need to obtain formulas for this parameter arises. In this article,   we address this problem for the case of rooted  product graphs and corona product graphs.

Given a graph $G$ and a graph $H$ with root vertex $v\in V(H)$, the \emph{rooted product graph} $G\circ_v H$ is defined to be the graph obtained from $G$ and $H$ by taking one copy of $G$ and $|V(G)|$ copies of $H$ and identifying the  $i^{th}$ vertex of $G$ with the root $v$ in the $i^{th}$ copy of $H$ for each  $i\in \{1,\dots , |V(G)|\}$. For every vertex $x\in V(G)$,  the copy of $H$ in $G\circ_v H$ containing $x$ will be denoted by $H_x$, and for every  IDF  $f$ on $G\circ_v H$, the restriction of $f$  to $V(H_x)$ and $V(H_x)\setminus \{x\}$
 will be denoted by $f_x$ and $f_x^-$,  respectively.
  Notice that $V(G\circ_v H)=\cup_{x\in V(G)}V(H_x)$ and so, if $f$ is a $\gamma_{_I}(G\circ_v H)$-function, then  $$\gamma_{_I}(G\circ_v H)=\sum_{x\in V(G)}\omega(f_x)=\sum_{x\in V(G)}\omega(f_x^-)+\sum_{x\in V(G)}f(x).$$

Throughout the paper, we will use the notation $K_t$,  $C_t$  and $P_t$ for complete graphs,  cycle graphs and path graphs of order $t$, respectively. 
%In general, the order of a graph $G$ will be denoted by $n(G)$. 
We will use the notation  $G \cong H$ if $G$ and $H$ are isomorphic graphs.

For the remainder of the paper, definitions will be introduced whenever a concept is needed.

\section{Italian domination of rooted product graphs}

To begin the study we need to establish some preliminary tools.

\begin{lemma}\label{restriction-H}
If $f(V_0,V_1,V_2)$ is a $\gamma_{_I}(G\circ_v H)$-function and $x\in V(G)$, then $\omega(f_x)\geq \gamma_{_I}(H)-1$. Furthermore, if $\omega(f_x)=\gamma_{_I}(H)-1$, then $f(x)=0$.
\end{lemma}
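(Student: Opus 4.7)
The plan is to use the structural fact that $H_x$ is glued to the rest of $G \circ_v H$ only at the vertex $x$. Consequently, for every $u \in V(H_x)\setminus\{x\}$, the open neighborhood $N_{G\circ_v H}(u)$ lies entirely inside $V(H_x)$, and therefore equals $N_{H_x}(u)$. This means that the IDF condition at such a $u$ is inherited by the restriction $f_x$: if $f(u)=0$, then $f_x(N_{H_x}(u)) = f(N_{G\circ_v H}(u)) \ge 2$. The only vertex of $H_x$ where $f_x$ might fail to satisfy the IDF condition is $x$ itself, since its neighborhood in $G\circ_v H$ includes vertices from $G$ outside $H_x$.

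With this in hand, I would split into two cases according to $f(x)$. First, suppose $f(x)\ge 1$. Then $x\notin V_0$, so no IDF condition at $x$ is required, and by the observation above, $f_x$ is itself an IDF on $H_x\cong H$. This yields $\omega(f_x)\ge \gamma_{_I}(H)$. Second, suppose $f(x)=0$. Define a new function $f_x'$ on $V(H_x)$ by $f_x'(y)=f_x(y)$ for all $y\ne x$ and $f_x'(x)=1$. The IDF condition at every $u\in V(H_x)\setminus\{x\}$ with $f_x'(u)=0$ still holds, since the restriction already satisfies it and we have only increased the value at $x$; and the IDF condition at $x$ is trivially vacuous because $f_x'(x)=1$. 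Hence $f_x'$ is an IDF on $H_x\cong H$, so
\[
\gamma_{_I}(H)\le \omega(f_x')=\omega(f_x)+1,
\]
which gives $\omega(f_x)\ge \gamma_{_I}(H)-1$. In either case we obtain the desired inequality.

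For the final claim, note that the first case actually produces the stronger bound $\omega(f_x)\ge \gamma_{_I}(H)$. Therefore, if $\omega(f_x)=\gamma_{_I}(H)-1$, we must be in the second case, which forces $f(x)=0$. This is essentially the contrapositive of Case~1 and requires no extra argument.

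There is no serious obstacle in this proof; the only point one must be careful with is verifying that the restriction argument relies on the fact that $H_x$ shares only the vertex $x$ with the rest of $G\circ_v H$, which is precisely the defining property of the rooted product. The modification by $+1$ at $x$ in Case~2 is the minimal correction required to turn $f_x$ into an IDF on $H_x$, and it directly accounts for the deficit of one unit in the lemma's statement.
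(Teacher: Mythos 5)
Your proof is correct and follows essentially the same route as the paper: both arguments rest on the observation that $f_x$ is an IDF on $H_x$ whenever $f(x)>0$, and that setting the value at $x$ to $1$ yields an IDF when $f(x)=0$; you merely phrase it as a direct case analysis where the paper argues by contradiction. The deduction of the final claim from Case~1 is also the same as in the paper.
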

\begin{proof}
Suppose to the contrary that there exists a vertex $x\in V(G)$ such that  $\omega(f_x)\leq\gamma_{_I}(H)-2$. Now, if $f(x)>0$, then $f_x$ is an IDF on $H_x$ and $\omega(f_x)< \gamma_I(H_x)$, which is a contradiction; while if $f(x)=0$, then the   function $g$, defined  by $g(x)=1$ and  $g(v)=f_x(v)$ whenever $v\neq x$, is an IDF on $H_x$ of weight  $\omega(g)=\omega(f_x)+1 < \gamma_{_I}(H_x)$, which is a contradiction again. Hence,  $\omega(f_x)\geq \gamma_{_I}(H)-1$ for every  $x\in V(G)$.

Now, if there exists a vertex  $x\in V(G)$ such that $\omega(f_x)=\gamma_{_I}(H)-1$ and $f(x)>0$, then $f_x$ is an IDF on $H_x$ of  weight $\omega(f_x)<\gamma_{_I}(H_x)$, which is a contradiction. Therefore, if $ \omega(f_x)=\gamma_{_I}(H)-1$, then $f(x)=0$.
\end{proof}

For every $\gamma_{_I}(G\circ_v H)$-function $f(V_0,V_1,V_2)$   we define the   sets
  $$\mathcal{A}_f=\{x\in V(G): \omega(f_x)\geq \gamma_{_I}(H)\}$$ and  $$\mathcal{B}_f=\{x\in V(G) : \omega(f_x)=\gamma_{_I}(H)-1\}.$$ Notice that by Lemma \ref{restriction-H} we have that if $\mathcal{B}_f\ne \emptyset $, then    $\{\mathcal{A}_f, \mathcal{B}_f\}$ is a partition of the vertex set of $G$ and so 
  $$\gamma_{_I}(G\circ_v H)=\sum_{x\in \mathcal{A}_f}\omega(f_x)+\sum_{x\in    \mathcal{B}_f}\omega(f_x).$$
  
  The following consequence of Lemma \ref{restriction-H} is immediate. 

\begin{corollary}\label{Corollary-of-Lemma1}
Let $f$ be a $\gamma_{_I}(G\circ_v H)$-function.  If $\mathcal{B}_f\neq \emptyset$, then either $H\in \{K_1,K_2,\overline{K_2}\}$  or $\gamma_{_I}(H)\ge 3$.
\end{corollary}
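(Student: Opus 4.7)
The plan is to prove the contrapositive: assuming $\mathcal{B}_f\neq \emptyset$ together with $\gamma_{_I}(H)\le 2$, I will show that $H$ is forced into the list $\{K_1, K_2, \overline{K_2}\}$. Pick any $x\in \mathcal{B}_f$; Lemma \ref{restriction-H} immediately gives $f(x)=0$ and $\omega(f_x)=\gamma_{_I}(H)-1\in\{0,1\}$. The structural fact I will lean on throughout is a defining feature of the rooted product: every vertex of $H_x$ other than the root copy $x$ has all of its neighbours in $G\circ_v H$ contained inside $V(H_x)$. Consequently, the Italian domination condition at any such non-root vertex of $H_x$ is decided entirely by $f_x$.

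The argument then splits into two short cases. If $\gamma_{_I}(H)=1$, then $\omega(f_x)=0$, so $f_x\equiv 0$. Any hypothetical vertex $u\in V(H_x)\setminus\{x\}$ would then give $f(N(u))=0$, violating the IDF condition; hence $V(H_x)=\{x\}$ and $H\cong K_1$. If instead $\gamma_{_I}(H)=2$, then $\omega(f_x)=1$, so exactly one vertex $u\in V(H_x)$ satisfies $f(u)=1$, and $u\ne x$ because $f(x)=0$. Now if there existed a third vertex $w\in V(H_x)\setminus\{x,u\}$, the IDF condition at $w$ would demand $f(N(w))\ge 2$, whereas $N(w)\subseteq V(H_x)$ contributes at most the single unit from $u$, a contradiction. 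Thus $|V(H)|\le 2$ and $H\in\{K_2,\overline{K_2}\}$.

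There is no genuine obstacle here: once one notices that the non-root vertices of a copy $H_x$ cannot be ``rescued'' by neighbours outside $H_x$, the case analysis is mechanical. The only point that needs a little care is verifying $\gamma_{_I}(K_1)=1$ and $\gamma_{_I}(K_2)=\gamma_{_I}(\overline{K_2})=2$, so that the three exceptional graphs are precisely those achieving $\gamma_{_I}(H)\le 2$; this makes the corollary tight and confirms that the exceptional list cannot be shortened.
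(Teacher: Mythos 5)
Your proof is correct and is precisely the fleshed-out version of the argument the paper leaves implicit (the corollary is stated as an immediate consequence of Lemma~\ref{restriction-H}): take $x\in\mathcal{B}_f$, use $f(x)=0$ and $\omega(f_x)=\gamma_{_I}(H)-1\le 1$, and exploit the fact that non-root vertices of $H_x$ have no neighbours outside $V(H_x)$ to force $n(H)\le 2$. Nothing is missing.
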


\begin{lemma}\label{CorollaryA_fDominante}
If $f$ is a  $\gamma_{_I}(G\circ_v H)$-function, then   $\mathcal{A}_f$ is a dominating set of~$G$. 
  \end{lemma}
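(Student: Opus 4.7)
The plan is to argue by contrapositive: I will show that every vertex of $V(G)\setminus\mathcal{A}_f$ has a neighbor in $\mathcal{A}_f$. If $\mathcal{B}_f=\emptyset$ then $\mathcal{A}_f=V(G)$ and the claim is immediate, so assume $\mathcal{B}_f\ne\emptyset$ and fix $x\in\mathcal{B}_f$. Lemma \ref{restriction-H} then gives $\omega(f_x)=\gamma_{_I}(H)-1$ and $f(x)=0$, and by the partition remark right after that lemma, $V(G)\setminus\mathcal{A}_f=\mathcal{B}_f$.

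The key observation I would use is structural: in the rooted product, the branch $H_x$ is attached to the rest of $G\circ_v H$ only through the root $x$. Consequently, every vertex $u\in V(H_x)\setminus\{x\}$ satisfies $N_{G\circ_v H}(u)\subseteq V(H_x)$, so the Italian condition at $u$ under the global function $f$ coincides with the Italian condition at $u$ under the restricted function $f_x$. Because $\omega(f_x)=\gamma_{_I}(H)-1<\gamma_{_I}(H_x)$, the function $f_x$ cannot be an IDF on $H_x$, and by the previous sentence its only possible failure point is the root itself. Therefore $f(N_{H_x}(x))=f_x(N_{H_x}(x))\le 1$.

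Next I invoke the global Italian condition at $x$. Since $f(x)=0$, we have $f(N_{G\circ_v H}(x))\ge 2$. Splitting the open neighborhood of $x$ as $N_{H_x}(x)\cup N_G(x)$, where $N_G(x)$ contributes only through the roots of the adjacent copies of $H$, one obtains $f(N_G(x))\ge 2-f(N_{H_x}(x))\ge 1$. Hence there exists a neighbor $y\in N_G(x)$ with $f(y)\ge 1$, and the second part of Lemma \ref{restriction-H} forbids $y\in\mathcal{B}_f$, forcing $y\in\mathcal{A}_f$.

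The main (and essentially only) obstacle is the structural observation in the second paragraph, namely that $x$ is the sole candidate for a failure of $f_x$ to be an IDF on $H_x$; once this is in hand, the proof is a short arithmetic on the two pieces of $x$'s neighborhood combined with Lemma \ref{restriction-H}.
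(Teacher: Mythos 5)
Your proof is correct and follows essentially the same route as the paper: both arguments rest on the observation that for $x\in\mathcal{B}_f$ (so $f(x)=0$ and $\omega(f_x)=\gamma_{_I}(H)-1$) the only vertex of $H_x$ whose Italian condition can depend on anything outside $H_x$ is the root $x$ itself, so the weight $2$ required in $N(x)$ cannot be supplied entirely inside $H_x$ without making $f_x$ an IDF on $H_x$ of weight below $\gamma_{_I}(H_x)$. You phrase this directly (deducing $f(N_{H_x}(x))\le 1$ and hence a positively weighted neighbour in $V(G)$, which must lie in $\mathcal{A}_f$), whereas the paper phrases it by contradiction, but the decomposition of $N(x)$ and the appeal to Lemma \ref{restriction-H} are identical.
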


\begin{proof}Let $f(V_0,V_1,V_2)$ be  a  $\gamma_{_I}(G\circ_v H)$-function. Notice that  Lemma \ref{restriction-H} leads to  $\mathcal{B}_f\subseteq V_0$.  Now, since $f$ is a $\gamma_{_I}(G\circ_v H)$-function,  if there exists  $x\in \mathcal{B}_f $ such that $N(x)\cap V(G)\cap  (V_1\cup V_2)=\emptyset$, then
 $f_x$  is an IDF on $H_x$ of weight $\omega(f_x)=\gamma_{_I}(H)-1<\gamma_{_I}(H_x)$,  which is a contradiction. Hence, every vertex
 $x\in \mathcal{B}_f $ is adjacent to some vertex belonging to $V(G)\cap  (V_1\cup V_2)\subseteq \mathcal{A}_f\setminus V_0$. Therefore, $\mathcal{A}_f$ is a dominating set of~$G$. 
\end{proof}

  \begin{lemma}\label{lemmaA}
If $f(V_0,V_1,V_2)$ is a $\gamma_{_I}(G\circ_v H)$-function such  that $\mathcal{B}_f\neq \emptyset$, then  $\omega(f_x)=\gamma_{_I}(H)$ for every  $x\in \mathcal{A}_f\cap (V_0\cup V_1)$; while 
   $\omega(f_x)\leq \gamma_{_I}(H)+1$  for every $x\in \mathcal{A}_f\cap V_2$.
\end{lemma}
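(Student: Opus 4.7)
The plan is to argue each upper bound by contradiction. Given $x\in\mathcal{A}_f$ with $\omega(f_x)$ exceeding the claimed bound, I will construct a new IDF $f'$ on $G\circ_v H$ by replacing $f_x$ with a suitably chosen function on $V(H_x)$ while leaving $f$ unchanged elsewhere, and show $\omega(f')<\omega(f)$, contradicting the minimality of $f$. For $f'$ to be an IDF it suffices that the replacement is an IDF of $H_x$ and assigns $x$ a value at least $f(x)$: the first requirement handles the IDF condition at the non-root vertices of $H_x$, while the second ensures the IDF condition at the neighbors of $x$ in $G$ (which rely on the old value $f(x)$).

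The key preparatory step is to exploit $\mathcal{B}_f\ne\emptyset$ to produce a $\gamma_{_I}(H)$-function on $H$ whose value at the root $v$ equals $1$. Pick any $y\in\mathcal{B}_f$; by Lemma \ref{restriction-H} we have $f(y)=0$. Define $\tilde h$ on $V(H_y)$ by $\tilde h(y):=1$ and $\tilde h(w):=f(w)$ for $w\ne y$; its weight is $\omega(f_y)+1=\gamma_{_I}(H)$. It is an IDF of $H_y$ because every non-root $w\in V(H_y)$ satisfies $N_{G\circ_v H}(w)\subseteq V(H_y)$, so the inequality $f(N_{H_y}(w))\ge 2$ already holds; the extra $+1$ at $y$ can only help, and at $y$ itself no condition is needed since $\tilde h(y)=1$. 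Transporting through the isomorphism $H_y\cong H$ yields the desired $h$.

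With $h$ in hand the three cases become short. For $x\in\mathcal{A}_f\cap V_0$, any $\gamma_{_I}(H)$-function on $H_x$ works as the replacement, since raising $f(x)$ from $0$ can only strengthen the IDF constraint at $G$-neighbors of $x$. For $x\in\mathcal{A}_f\cap V_1$, I use $h$ on $H_x$, which matches $f(x)=1=h(v)$ and has weight $\gamma_{_I}(H)$. For $x\in\mathcal{A}_f\cap V_2$, I use the IDF obtained from $h$ by adding $1$ at the root, which has weight $\gamma_{_I}(H)+1$ and value $2$ at $x$. In each case $\omega(f')$ strictly drops, giving the required contradiction. The main subtlety is really the $V_1$ case: a generic $\gamma_{_I}(H)$-function could vanish at the root, and then lowering $f'(x)$ below $f(x)$ could break the IDF condition at a $V_0$-neighbor of $x$ in $G$; the preparatory construction above is exactly what circumvents this obstacle.
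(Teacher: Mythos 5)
Your proposal is correct and follows essentially the same route as the paper: argue by contradiction and replace $f_x$ by a cheaper IDF on $H_x$ built from the restriction of $f$ to a copy $H_y$ with $y\in\mathcal{B}_f$ (the paper uses $f_y^-$ together with the new root value $1$ or $2$, which is exactly your $\tilde h$, respectively $\tilde h$ plus one at the root). Your explicit justification that the replacement need only be an IDF of $H_x$ with root value at least $f(x)$ is the same verification the paper leaves implicit.
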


  \begin{proof}
Let $f$ be a $\gamma_{_I}(G\circ_v H)$-function and $u\in V(G)$ such that $u\in \mathcal{B}_f$. 
First, suppose to the contrary that there exists $x\in \mathcal{A}_f\cap (V_0\cup V_1)$ such that $\omega(f_x)\geq \gamma_{_I}(H)+1$. Let $g$ be the  function  on $G\circ_vH$ defined by  $g(w)=f(w)$ for every $w\notin V(H_x)$, $g(x)=1$ and  $g_x^-$  is induced by  $f_u^-$. It is readily seen that $g$ is an IDF on $G\circ_vH$ and  $\omega(g)\le \omega(f)-1=\gamma_{_I}(G\circ_v H)-1$, which is  a contradiction. Therefore, $\omega(f_x)=\gamma_{_I}(H)$ for every $x\in \mathcal{A}_f\cap (V_0\cup V_1)$.

Now, suppose to the contrary that there exists $x\in \mathcal{A}_f\cap V_2$ such that $\omega(f_x)\geq \gamma_{_I}(H)+2$.  In this case we define a function $g$ on $G\circ_vH$ by  $g(w)=f(w)$ for every $w\notin V(H_x)$, $g(x)=2$  and  $g_x^-$  is induced by  $f_u^-$. It is readily seen that $g$ is an IDF on $G\circ_vH$ and  $\omega(G)\le \omega(f)-1=\gamma_{_I}(G\circ_v H)-1$, which is  a contradiction. Therefore, $\omega(f_x)\leq\gamma_{_I}(H)+1$ for every $x\in \mathcal{A}_f\cap V_2$.
\end{proof}

  Let us define the sets $$\mathcal{A}_f^{i,j}=\{x\in \mathcal{A}_f: f(x)=i \text{ and } \omega(f_x)=j\},$$  where $i\in \{0,1,2\}$, $j\in \{\gamma_{_I}(H), \gamma_{_I}(H)+1\}$. For simplicity,  we will  use the notation  $m=\gamma_{_I}(H)$ in some lemmas and proofs, specially when $\gamma_{_I}(H)$ is a superscript.
  
  From  Lemma \ref{lemmaA}  we have the following consequence. 
  
  \begin{corollary}\label{CorollaryLemma3}
  If $f(V_0,V_1,V_2)$ is a $\gamma_{_I}(G\circ_v H)$-function such  that $\mathcal{B}_f\neq \emptyset$, then $$\mathcal{A}_f=\mathcal{A}_f^{0,m}\cup \mathcal{A}_f^{1,m}\cup \mathcal{A}_f^{2,m}\cup \mathcal{A}_f^{2,m+1}.$$
\end{corollary}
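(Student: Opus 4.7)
The plan is to combine the lower bound $\omega(f_x)\geq \gamma_{_I}(H)$ coming from the definition of $\mathcal{A}_f$ with the matching upper bounds supplied by Lemma \ref{lemmaA}, and then just read off the admissible pairs $(f(x),\omega(f_x))$.

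Concretely, I would fix an arbitrary $x\in \mathcal{A}_f$ and split into three cases according to the value of $f(x)\in\{0,1,2\}$. If $f(x)\in\{0,1\}$, then $x\in \mathcal{A}_f\cap(V_0\cup V_1)$ and Lemma \ref{lemmaA} (which applies because $\mathcal{B}_f\neq\emptyset$) forces $\omega(f_x)=m$, so $x\in\mathcal{A}_f^{0,m}$ or $x\in\mathcal{A}_f^{1,m}$ respectively. If $f(x)=2$, then $x\in\mathcal{A}_f\cap V_2$ and Lemma \ref{lemmaA} yields $\omega(f_x)\le m+1$; combined with $\omega(f_x)\ge m$ this leaves only $\omega(f_x)\in\{m,m+1\}$, placing $x$ in $\mathcal{A}_f^{2,m}\cup\mathcal{A}_f^{2,m+1}$.

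Taking the union over $x\in\mathcal{A}_f$ gives $\mathcal{A}_f\subseteq \mathcal{A}_f^{0,m}\cup \mathcal{A}_f^{1,m}\cup \mathcal{A}_f^{2,m}\cup \mathcal{A}_f^{2,m+1}$, and the reverse inclusion is immediate from the very definition of the sets $\mathcal{A}_f^{i,j}$. I do not foresee any real obstacle: the statement is essentially bookkeeping that records the four configurations permitted by Lemma \ref{lemmaA} for a vertex of $\mathcal{A}_f$, and the only minor thing to be careful about is citing that lemma correctly with the hypothesis $\mathcal{B}_f\neq\emptyset$.
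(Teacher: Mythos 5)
Your argument is correct and is exactly the reasoning the paper has in mind: the corollary is stated there as an immediate consequence of Lemma \ref{lemmaA}, obtained by combining the bound $\omega(f_x)\ge\gamma_{_I}(H)$ from the definition of $\mathcal{A}_f$ with the case analysis on $f(x)$ that the lemma provides. Your write-up simply makes this bookkeeping explicit, including the reverse inclusion, and there is no gap.
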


\begin{lemma}\label{existenciaf}
Let $f$ be a $\gamma_{_I}(G\circ_v H)$-function.  If $\mathcal{B}_f\neq \emptyset$, then  there exists a $\gamma_{_I}(G\circ_v H)$-function $g$ such that $\mathcal{B}_g=\mathcal{B}_f$ and $$\mathcal{A}_g\in \{\mathcal{A}_g^{1,m},\mathcal{A}_g^{2,m}, \mathcal{A}_g^{2,m+1},\mathcal{A}_g^{1,m}\cup\mathcal{A}_g^{2,m+1}\}.$$
\end{lemma}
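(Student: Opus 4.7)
The plan is to fix a vertex $u\in\mathcal B_f$ (provided by the hypothesis $\mathcal B_f\neq\emptyset$) and use $f_u^-$ as a template to build two canonical IDFs on $H$; we then keep $g_x=f_x$ for every $x\in\mathcal B_f$ (so that $\mathcal B_g=\mathcal B_f$ automatically) and replace the restriction of $f$ on each copy $H_x$ indexed by $x\in\mathcal A_f$ by a suitable canonical function. Via the isomorphism $H_u\cong H$ sending $u\mapsto v$, the weight-$(m-1)$ function $f_u^-$ extended to $H$ by assigning value $1$, resp.\ $2$, at $v$ yields IDFs $h_1$ and $h_2$ on $H$ of weights $m$ and $m+1$ respectively (the condition inherited from $f$ holds at every vertex other than $v$, and at $v$ it is vacuous). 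In particular $h_1$ is a $\gamma_{_I}(H)$-function and $\alpha_2\le m+1$, where $\alpha_2$ denotes the minimum weight of an IDF on $H$ with value $2$ at~$v$.

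\emph{Case $\alpha_2=m$.} Fix a $\gamma_{_I}(H)$-function $h_2^*$ with $h_2^*(v)=2$, and set $g_x=h_2^*$ for every $x\in\mathcal A_f$. The IDF condition within each copy $H_x$ is immediate; for $x\in\mathcal B_f$, Lemma~\ref{CorollaryA_fDominante} gives $|N_G(x)\cap\mathcal A_f|\ge 1$, so $g(N(x))\ge 2|N_G(x)\cap\mathcal A_f|\ge 2$. A direct weight count shows that $\omega(g)=m|\mathcal A_f|+(m-1)|\mathcal B_f|=\omega(f)-|\mathcal A_f^{2,m+1}|$, and the optimality of $f$ forces $\mathcal A_f^{2,m+1}=\emptyset$ and $\omega(g)=\omega(f)$. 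Hence $g$ is a $\gamma_{_I}(G\circ_v H)$-function with $\mathcal A_g=\mathcal A_g^{2,m}$.

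\emph{Case $\alpha_2=m+1$.} Here $\mathcal A_f^{2,m}$ must be empty, since a vertex $x\in\mathcal A_f^{2,m}$ would make $f_x$ an IDF on $H_x$ with $f(x)=2$ and weight $m$, contradicting $\alpha_2>m$. Define $g_x=h_1$ for $x\in\mathcal A_f^{0,m}\cup\mathcal A_f^{1,m}$ and $g_x=h_2$ for $x\in\mathcal A_f^{2,m+1}$. A direct count yields $\omega(g)=\omega(f)$. Moreover $g(y)\ge f(y)$ for every $y\in V(G)$ (the strict inequality occurs only on $\mathcal A_f^{0,m}$), so for every $x\in\mathcal B_f$ we have $g(N_G(x))\ge f(N_G(x))$ and therefore $g(N(x))\ge f(N(x))\ge 2$. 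Thus $g$ is a $\gamma_{_I}(G\circ_v H)$-function with $\mathcal A_g=\mathcal A_g^{1,m}\cup\mathcal A_g^{2,m+1}$, which reduces to one of the listed sets $\mathcal A_g^{1,m}$, $\mathcal A_g^{2,m+1}$, or the honest union, depending on which of the two pieces happens to be empty.

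The delicate point is the IDF check at vertices of $\mathcal B_f$, where $g(N_{H_x}(x))$ is kept equal to $f(N_{H_x}(x))$ and so one must guarantee $g(N_G(x))\ge f(N_G(x))$. The two constructions are tailored precisely for this: in the first case all $y\in\mathcal A_f$ become vertices of $V_2$ so $g(y)=2\ge f(y)$ trivially, while in the second case the vanishing of $\mathcal A_f^{2,m}$ is what prevents any $G$-neighbour $y$ of $x$ with $f(y)=2$ from being demoted to a lower $g$-value.
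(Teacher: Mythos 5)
Your proof is correct, and while it rests on the same underlying mechanism as the paper's --- replace the restrictions $f_x$ on the copies $H_x$ with $x\in\mathcal{A}_f$ by canonical induced functions, keep $g_x=f_x$ on $\mathcal{B}_f$, and verify the Italian condition only at the roots $x\in\mathcal{B}_f$, where the $G$-side of the neighbourhood can only gain weight --- it is organized around a genuinely different case split. The paper normalizes $f$ by a sequence of pairwise swaps keyed on which of the classes $\mathcal{A}_f^{0,m},\mathcal{A}_f^{1,m},\mathcal{A}_f^{2,m},\mathcal{A}_f^{2,m+1}$ are inhabited (copying $f_y$ from one class onto another, and finally eliminating $\mathcal{A}_g^{0,m}$ using $g_y^-$ for $y\in\mathcal{B}_g$), whereas you split on the invariant $\alpha_2$ of the pair $(H,v)$ and replace all of $\mathcal{A}_f$ in one shot, drawing your templates from $f_u^-$ with $u\in\mathcal{B}_f$ and, in the case $\alpha_2=m$, from a $\gamma_{_I}(H)$-function external to $f$. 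Your version buys a cleaner two-case structure, derives $\mathcal{A}_f^{2,m+1}=\emptyset$ as a consequence of optimality rather than handling it as a separate swap, and makes explicit the one fact the whole argument hinges on (whether an optimal IDF of $H$ can place $2$ at $v$); the paper's version stays entirely internal to $f$ and lands directly on the five-set intermediate list before the last clean-up step. Both correctly reach all four possible forms of $\mathcal{A}_g$: your first case produces $\mathcal{A}_g^{2,m}$ and your second produces $\mathcal{A}_g^{1,m}\cup\mathcal{A}_g^{2,m+1}$ with either piece possibly empty. The only small presentational gap is that your weight count $\omega(g)=m|\mathcal{A}_f|+(m-1)|\mathcal{B}_f|=\omega(f)-|\mathcal{A}_f^{2,m+1}|$ silently invokes Lemma~\ref{lemmaA} and Corollary~\ref{CorollaryLemma3} (which require $\mathcal{B}_f\neq\emptyset$, available here); citing them would make the step airtight.
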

  \begin{proof}
  Let $f$ be a $\gamma_{_I}(G\circ_v H)$-function with $\mathcal{B}_f\neq \emptyset$. Notice that, by Lemma \ref{CorollaryA_fDominante},  $\mathcal{A}_f\neq\emptyset$.
Now, since $f$ is a  $\gamma_{_I}(G\circ_v H)$-function, if $\mathcal{A}_f^{2,m}\neq\emptyset$, then $\mathcal{A}_f^{2,m+1}=\emptyset$.  Furthermore,  if $\mathcal{A}_f^{1,m}\neq\emptyset$ and $\mathcal{A}_f^{0,m}\neq\emptyset$, then we fix $y\in\mathcal{A}_f^{1,m}$ and we  define a $\gamma_{_I}(G\circ_v H)$-function $g$  such that for every $x\in \mathcal{A}_f^{0,m}$,  $g_x$ is induced by $f_y$ and $g_z=f_z$ for every $z\in V(G)\setminus \mathcal{A}_f^{0,m}$. In such a case, $\mathcal{A}_g^{1,m}\neq\emptyset$ and $\mathcal{A}_g^{0,m}=\emptyset$.

 Using similar arguments we can show that  if $\mathcal{A}_f^{2,m}\neq\emptyset$, then there exists a  $\gamma_{_I}(G\circ_v H)$-function $g$ such that $\mathcal{A}_g^{0,m}\cup \mathcal{A}_g^{1,m}\cup \mathcal{A}_g^{2,m+1}=\emptyset$.
 
Hence,  by Corollary \ref{CorollaryLemma3} we conclude that $$\mathcal{A}_g\in \{\mathcal{A}_g^{0,m},\mathcal{A}_g^{1,m},\mathcal{A}_g^{2,m},\mathcal{A}_g^{0,m}\cup \mathcal{A}_g^{2,m+1},\mathcal{A}_g^{1,m}\cup\mathcal{A}_g^{2,m+1}\}.$$
Finally, if $\mathcal{A}_g^{0,m}\ne \emptyset$, then we fix $y\in \mathcal{B}_g$ and 
we define a function $h$ on $G\circ_v H$ by  $h_z=g_z$ for every $z\in V(G)\setminus \mathcal{A}_g^{0,m}$ and for every $x\in \mathcal{A}_g^{0,m}$ we set $h(x)=1$ and $h_x^-$ is induced by $g^-_y$ . Notice that $h$ is an IDF of weight $\omega(h)=\omega(g)=\omega(f)$ and $\mathcal{A}_h\in \{ \mathcal{A}_h^{1,m},\mathcal{A}_h^{2,m},  \mathcal{A}_h^{2,m+1},\mathcal{A}_h^{1,m}\cup\mathcal{A}_h^{2,m+1}\}.$
Therefore, the result follows.
\end{proof}

\begin{proposition}\label{lemma-bound-B}
If there exists a $\gamma_{_I}(G\circ_v H)$-function $f$ such that $\mathcal{B}_f\neq\emptyset$, then $$\gamma_{_I}(G\circ_v H)\leq n(G)(\gamma_{_I}(H)-1)+\gamma_{_I}(G).$$
\end{proposition}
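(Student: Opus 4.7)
The plan is to exhibit an explicit Italian dominating function on $G\circ_v H$ of weight exactly $n(G)(\gamma_{_I}(H)-1)+\gamma_{_I}(G)$. The key observation is that the hypothesis $\mathcal{B}_f\neq\emptyset$ hands us a copy of $H$ whose root carries weight $0$ while the remaining vertices carry total weight $\gamma_{_I}(H)-1$; this ``deficient but root-free'' template can be replicated at every copy of $H$, and a $\gamma_{_I}(G)$-function will supply the weight on the spine $V(G)$ needed to finish off the roots.

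To set up the construction, I would first pick any $u\in\mathcal{B}_f$, so that by Lemma \ref{restriction-H} one has $f(u)=0$ and $\omega(f_u^-)=\gamma_{_I}(H)-1$. Next, fix a $\gamma_{_I}(G)$-function $h$ on $G$, and for each $x\in V(G)$ let $\varphi_x\colon V(H_x)\to V(H_u)$ denote the canonical root-preserving isomorphism with $\varphi_x(x)=u$. Define $g$ on $V(G\circ_v H)$ by $g(x)=h(x)$ for $x\in V(G)$ and $g(w)=f(\varphi_x(w))$ for $w\in V(H_x)\setminus\{x\}$. A direct count yields
$$\omega(g)=\omega(h)+n(G)\,\omega(f_u^-)=\gamma_{_I}(G)+n(G)(\gamma_{_I}(H)-1),$$
so it only remains to verify that $g$ is an IDF.

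The verification should split into two short cases. For a zero-vertex $y\in V(H_x)\setminus\{x\}$, all of its neighbours lie inside $V(H_x)$; I would transport the sum $\sum_{z\in N(y)}g(z)$ to the $u$-copy via $\varphi_x$ and note that the non-root neighbours contribute exactly the same as under $f$, while replacing $f(u)=0$ by $g(x)=h(x)\geq 0$ can only help, so the sum stays $\geq 2$. For a zero-vertex $x\in V(G)$, the inclusion $N_G(x)\subseteq N_{G\circ_v H}(x)$ together with $g|_{V(G)}=h$ reduces the Italian condition at $x$ directly to the IDF property of $h$ on $G$.

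There is no deep obstacle here; the only step that demands care, and the single place the hypothesis $\mathcal{B}_f\neq\emptyset$ is actually used, is the selection of a source copy whose root carries weight zero. If the root of the source copy instead carried positive weight, then overwriting it by $h(x)$ could decrease the sum of neighbour weights for some vertex in $V(H_x)\setminus\{x\}$ and break Italian domination. Lemma \ref{restriction-H} guarantees $f(u)=0$ for every $u\in\mathcal{B}_f$, which is exactly the ingredient that makes the replication go through, and the bound $\gamma_{_I}(G\circ_v H)\leq\omega(g)$ then gives the claim.
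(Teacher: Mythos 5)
Your proposal is correct and follows essentially the same route as the paper: replicate the restriction $f_u^-$ of a deficient copy (whose root has value $0$ by Lemma \ref{restriction-H}) into every copy of $H$, overlay a $\gamma_{_I}(G)$-function on $V(G)$, and count the weight. Your two-case verification that the resulting function is an IDF is just a more explicit version of what the paper leaves implicit.
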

\begin{proof}
Let $f$ be a $\gamma_{_I}(G\circ_v H)$-function and $u\in V(G)$ such that $u\in \mathcal{B}_f$. Let $h$ be a  $\gamma_{_I}(G)$-function.
By Lemma \ref{restriction-H}, $f(u)=0$, so that  $f_u^-$ is an IDF on $H_u-\{u\}$. Notice that $\omega(f_u^-)=\omega(f_u)=\gamma_{_I}(H)-1$. Consider the function $g$  on $G\circ_v H$ such that  $g_x^-$ is induced by $f_u^-$ and $g(x)=h(x)$ for every vertex $x\in V(G)$. Thus, $g$ is an IDF on $G\circ_v H$ of weight $\omega(g)=n(G)\omega(f_u^-)+\omega(h)=n(G)(\gamma_{_I}(H)-1)+\gamma_{_I}(G)$, concluding that $\gamma_{_I}(G\circ_v H)\leq n(G)(\gamma_{_I}(H)-1)+\gamma_{_I}(G)$.
\end{proof}

\begin{theorem}[Trichotomy]\label{teo-bounds-rooted}

For any graph $G$,  any graph $H$ and any  vertex $v\in~V(H)$, 
\begin{itemize}
\item $\gamma_{_I}(G\circ_v H)=n(G)(\gamma_{_I}(H)-1)+\gamma(G)$ or
\item $\gamma_{_I}(G\circ_v H)= n(G)(\gamma_{_I}(H)-1)+\gamma_{_I}(G)$ or

\item $\gamma_{_I}(G\circ_v H)= n(G)\gamma_{_I}(H).$
\end{itemize}
%$$\gamma_{_I}(G\circ_v H)\in \{n(G)(\gamma_{_I}(H)-1)+\gamma(G), n(G)(\gamma_{_I}(H)-1)+\gamma_{_I}(G), n(G)\gamma_{_I}(H)\}.$$
Furthermore, the following statements hold for any pair of  $\gamma_{_I}(G\circ_v H)$-functions $f$ and $f'$.
\begin{itemize}
\item  $\mathcal{B}_f=~\emptyset$ if and only if $\mathcal{B}_{f'}=~\emptyset$.
\item $\gamma_{_I}(G\circ_v H)=n(G)\gamma_{_I}(H)$ if and only if $\mathcal{B}_f=~\emptyset$. 
\end{itemize}
\end{theorem}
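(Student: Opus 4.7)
The plan is to dispose of the case $\mathcal{B}_f=\emptyset$ first, since it simultaneously accounts for the value $n(G)\gamma_{_I}(H)$ and proves both ``Furthermore'' equivalences. Write $m:=\gamma_{_I}(H)$. If $f$ is a $\gamma_{_I}(G\circ_v H)$-function with $\mathcal{B}_f=\emptyset$, then $\omega(f_x)\geq m$ for every $x\in V(G)$, so $\gamma_{_I}(G\circ_v H)=\sum_{x\in V(G)}\omega(f_x)\geq n(G)m$; the reverse inequality is immediate by placing a minimum IDF on each copy of $H$. Conversely, for any $\gamma_{_I}$-function $f'$ with $\mathcal{B}_{f'}\neq\emptyset$, Lemma \ref{restriction-H} yields $\omega(f')\leq (n(G)-1)m+(m-1)=n(G)m-1$, contradicting $\omega(f')=n(G)m$. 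This simultaneously establishes both ``Furthermore'' equivalences and settles the value $n(G)\gamma_{_I}(H)$ in the trichotomy.

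Now assume $\mathcal{B}_f\neq\emptyset$ for every $\gamma_{_I}(G\circ_v H)$-function. I would invoke Lemma \ref{existenciaf} to pick a $\gamma_{_I}$-function $g$ with $\mathcal{A}_g$ in one of the four classes $\mathcal{A}_g^{1,m}$, $\mathcal{A}_g^{2,m}$, $\mathcal{A}_g^{2,m+1}$, $\mathcal{A}_g^{1,m}\cup\mathcal{A}_g^{2,m+1}$. Summing copy weights in each alternative gives
$$\omega(g)=n(G)(m-1)+t,\qquad t:=|\mathcal{A}_g^{1,m}|+|\mathcal{A}_g^{2,m}|+2|\mathcal{A}_g^{2,m+1}|,$$
where $t$ is precisely the weight of the restricted function $g|_{V(G)}\colon V(G)\to\{0,1,2\}$ sending $x\mapsto g(x)$.

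It remains to show $t\in\{\gamma(G),\gamma_{_I}(G)\}$. For each $u\in\mathcal{B}_g$ the IDF condition $g(N(u))\geq 2$, combined with $g_u^-(N_{H_u}(u))\leq 1$ (otherwise $g_u$ would be an IDF on $H_u$ of weight $m-1$, contradicting Lemma \ref{restriction-H}), forces $g|_{V(G)}(N_G(u))\geq 1$. When $\mathcal{A}_g\in\{\mathcal{A}_g^{2,m},\mathcal{A}_g^{2,m+1}\}$ the function $g|_{V(G)}$ takes values only in $\{0,2\}$, so this inequality immediately upgrades to $g|_{V(G)}(N_G(u))\geq 2$; hence $g|_{V(G)}$ is an IDF of $G$ of weight $t$, giving $t\geq\gamma_{_I}(G)$, and Proposition \ref{lemma-bound-B} forces equality. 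When $\mathcal{A}_g=\mathcal{A}_g^{1,m}$, an exchange in the spirit of Lemma \ref{existenciaf} allows one to assume $g_u^-(N_{H_u}(u))=1$ for each $u\in\mathcal{B}_g$, so $\mathcal{A}_g$ need only dominate $G$, and minimality of $\omega(g)$ forces $|\mathcal{A}_g|=\gamma(G)$, i.e.\ $t=\gamma(G)$. The mixed class $\mathcal{A}_g^{1,m}\cup\mathcal{A}_g^{2,m+1}$ is then handled by one further exchange that reduces it to one of the two previous cases.

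The hardest part I anticipate is this last step: cleanly settling the mixed subcase $\mathcal{A}_g^{1,m}\cup\mathcal{A}_g^{2,m+1}$ and verifying that $\mathcal{A}_g$ can always be arranged to be extremal (a minimum dominating set, or realising a minimum IDF of $G$), rather than merely bounded by the corresponding invariant. The four-way normalization of $\mathcal{A}_g$ afforded by Lemma \ref{existenciaf} is tailored precisely to make this dichotomy transparent, and the exchanges are local modifications that transport a quasi-IDF from one copy of $H$ to another without disturbing the IDF condition at the roots.
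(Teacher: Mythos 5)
Your overall skeleton is the paper's (dispose of $\mathcal{B}_f=\emptyset$ first, then normalize via Lemma \ref{existenciaf} and write $\omega(g)=n(G)(m-1)+t$), but three of your steps do not hold as stated. First, Lemma \ref{restriction-H} gives \emph{lower} bounds $\omega(f'_x)\ge m-1$, not upper bounds, so it cannot yield $\omega(f')\le (n(G)-1)m+(m-1)$; by Lemma \ref{lemmaA} a copy rooted at a vertex of $V_2$ may have weight $m+1$, so a minimum function with $\mathcal{B}_{f'}\ne\emptyset$ can a priori still have total weight $n(G)m$. (The paper extracts the ``Furthermore'' equivalences only as a by-product of the full case analysis.) Second, your $t=|\mathcal{A}_g^{1,m}|+|\mathcal{A}_g^{2,m}|+2|\mathcal{A}_g^{2,m+1}|$ is indeed the excess $\omega(g)-n(G)(m-1)$, but it is \emph{not} the weight of $g|_{V(G)}$, which equals $|\mathcal{A}_g^{1,m}|+2|\mathcal{A}_g^{2,m}|+2|\mathcal{A}_g^{2,m+1}|$. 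In the subcase $\mathcal{A}_g=\mathcal{A}_g^{2,m}$ your IDF argument therefore only gives $2t\ge\gamma_{_I}(G)$, which together with $t\le\gamma_{_I}(G)$ from Proposition \ref{lemma-bound-B} does not place $t$ in $\{\gamma(G),\gamma_{_I}(G)\}$. The paper settles this subcase differently: $|\mathcal{A}_g|\ge\gamma(G)$ because $\mathcal{A}_g$ dominates $G$ (Lemma \ref{CorollaryA_fDominante}), and an explicit construction (transplant $f_x$, with root value $2$, onto a $\gamma(G)$-set and $f_y$ with $y\in\mathcal{B}_f$ elsewhere) supplies the matching upper bound $n(G)(m-1)+\gamma(G)$.

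Third, and most seriously, the exchange you invoke in the subcase $\mathcal{A}_g=\mathcal{A}_g^{1,m}$ --- arranging $g_u^-(N_{H_u}(u))=1$ for each $u\in\mathcal{B}_g$ --- is not available in general: whether some minimum IDF of $H-\{v\}$ puts positive weight on $N(v)$ is a genuine property of the pair $(H,v)$ (it is exactly condition (i) of Theorem \ref{characterization2}), and when it fails the correct value in this subcase is $n(G)(m-1)+\gamma_{_I}(G)$, not $n(G)(m-1)+\gamma(G)$; your exchange would contradict Theorem \ref{characterization3}. The organizing idea your proposal is missing is the paper's preliminary dichotomy, applied \emph{before} the four-way split on $\mathcal{A}_f$: either some $x\in\mathcal{B}_f$ has a neighbour $y\in V(H_x)$ with $f(y)>0$, in which case transplanting $f_x^-$ into every copy and placing $1$'s on a $\gamma(G)$-set already yields the value $n(G)(m-1)+\gamma(G)$; or $N(x)\cap V(H_x)\subseteq V_0$ for every $x\in\mathcal{B}_f$, in which case the restriction of $f$ to $V(G)$ really is an IDF on $G$ and Cases $\mathcal{A}_f^{1,m}$, $\mathcal{A}_f^{2,m+1}$ and the mixed case all give $\sum_{x\in\mathcal{A}_f}f(x)\ge\gamma_{_I}(G)$, with equality forced by Proposition \ref{lemma-bound-B}. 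Rebuilding the second half of your argument around that dichotomy would also dispose of the mixed case without any further exchange.
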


\begin{proof} Let $f(V_0,V_1,V_2)$ be a $\gamma_{_I}(G\circ_v H)$-function.
If $\mathcal{B}_f=~\emptyset$, then  $\omega(f_x)\ge \gamma_{_I}(H)$ for every $x\in V(G)$, which implies that $\gamma_{_I}(G\circ_v H)\ge n(G)\gamma_{_I}(H)$. Hence,  $\gamma_{_I}(G\circ_v H)=n(G)\gamma_{_I}(H)$, as we always can construct an IDF $g$ such that  $\omega(g_x)=\gamma_{_I}(H)$ for every $x\in V(G)$. 

From now on we consider the case $\mathcal{B}_f\neq\emptyset$, and so we can assume that $f$ is a $\gamma_{_I}(G\circ_v H)$-function  which satisfies  Lemma \ref{existenciaf}. 

First, suppose that there exists $x\in \mathcal{B}_f$ such that $f(y)>0$ for some $y\in N(x)\cap V(H_x)$. Let $S$ be a $\gamma(G)$-set and consider the function $g$ on $G\circ_v H$ where  $g_u^-$ is induced by $f_x^-$  for every $u\in V(G)$, $g(u)=1$ for every $u\in S$ and $g(u)=0$ for every $u\in V(G)\setminus S$. Notice that for every $u\in V(G)$, $g_u^-$ is an IDF on $H_u-\{u\}$. Moreover, since $S$ is a dominating set of $G$ and   for every  $u\in V(G)\setminus S$ there exists a vertex $y\in N(u)\cap V(H_u)$ with $g(y)>0$, we conclude that  $g$ is an IDF on $G\circ_v H$ of weight $n(G)(\gamma_{_I}(H)-1)+\gamma(G)$, concluding that $\gamma_{_I}(G\circ_v H)\leq n(G)(\gamma_{_I}(H)-1)+\gamma(G)$. To show that in fact this is  an equality, we observe that Lemma~\ref{CorollaryA_fDominante} and Lemma  \ref{lemmaA} lead to
\begin{align*}
\gamma_{_I}(G\circ_v H)&\geq \vert \mathcal{A}_f\vert \gamma_{_I}(H)+\vert\mathcal{B}_f\vert(\gamma_{_I}(H)-1)\\
&=n(G)(\gamma_{_I}(H)-1)+\vert \mathcal{A}_f\vert \\
&\ge n(G)(\gamma_{_I}(H)-1)+\gamma(G).
\end{align*}
Hence, $\gamma_{_I}(G\circ_v H)= n(G)(\gamma_{_I}(H)-1)+\gamma(G)$.

From now on we suppose that $N(x)\cap V(H_x)\subseteq V_0$ for every $x\in  \mathcal{B}_f$.
 Notice that in this case,  $|N(x)\cap \mathcal{A}_f \cap  V_1 |\ge 2$ or $|N(x)\cap \mathcal{A}_f \cap  V_2)|\ge 1$ for  every vertex $x\in \mathcal{B}_f$. Furthermore, since $f$ satisfies Lemma \ref{existenciaf},  $\mathcal{A}_f\subseteq V_1\cup V_2$. Hence, the restriction of $f$ to $V(G)$ is an IDF on $G$, and so  $$\sum_{x\in \mathcal{A}_f}f(x)\ge \gamma_{_I}(G).$$
 Since  $f$  satisfies  Lemma \ref{existenciaf},   we can differentiate the following cases.
 
 \vspace{0,3cm}
\noindent Case 1. $\mathcal{A}_f=\mathcal{A}_f^{1,m}$.
In this case,  
\begin{align*}
\gamma_{_I}(G\circ_v H)&=|\mathcal{A}_f|\gamma_{_I}(H)+|\mathcal{B}_f|(\gamma_{_I}(H)-1)\\
&=n(G)(\gamma_{_I}(H)-1)+|\mathcal{A}_f|\\
&\ge n(G)(\gamma_{_I}(H)-1)+\gamma_{_I}(G).
\end{align*}
Hence, by Proposition \ref{lemma-bound-B} we conclude that $\gamma_{_I}(G\circ_v H)=n(G)(\gamma_{_I}(H)-1)+\gamma_{_I}(G)$.

 \vspace{0,3cm}
\noindent Case 2. $\mathcal{A}_f=\mathcal{A}_f^{2,m}$.  By Lemma~\ref{CorollaryA_fDominante} we have that $|\mathcal{A}_f|\ge \gamma(G)$, so that 
\begin{align*}
\gamma_{_I}(G\circ_v H)&=|\mathcal{A}_f|\gamma_{_I}(H)+|\mathcal{B}_f|(\gamma_{_I}(H)-1)\\
&=n(G)(\gamma_{_I}(H)-1)+|\mathcal{A}_f|\\
&\ge n(G)(\gamma_{_I}(H)-1)+\gamma(G).
\end{align*}
 To show the equality, we take  a $\gamma(G)$-set $S$ and fix $x\in \mathcal{A}_f$ and $y\in \mathcal{B}_f$. Consider the function $g$ on $G\circ_v H$ such that for every $u\in S$, $g_u$ is induced by $f_x$ and for every $u\in V(G)\setminus S$, $g_u$ is induced by $f_y$. Then, $g(u)=2$
 for every $u\in S$ and we have that  $g$ is an IDF on $G\circ_v H$ of weight $n(G)(\gamma_{_I}(H)-1)+\gamma(G)$, concluding that $\gamma_{_I}(G\circ_v H)= n(G)(\gamma_{_I}(H)-1)+\gamma_{}(G)$.

 \vspace{0,3cm}
\noindent Case 3. $\mathcal{A}_f=\mathcal{A}_f^{2,m+1}$. By Lemma \ref{CorollaryA_fDominante} we have that $|\mathcal{A}_f|\ge \gamma(G)$ and since $\gamma_{_I}(G)\le 2\gamma(G)$ we deduce that
 \begin{align*}
 \gamma_{_I}(G\circ_v H)&=|\mathcal{A}_f|(\gamma_{_I}(H)+1)+|\mathcal{B}_f|(\gamma_{_I}(H)-1)\\
 &=n(G)(\gamma_{_I}(H)-1)+2|\mathcal{A}_f|\\
 &\ge n(G)(\gamma_{_I}(H)-1)+2\gamma(G)\\
 &\ge n(G)(\gamma_{_I}(H)-1)+\gamma_{_I}(G).
 \end{align*} Hence,  by Proposition \ref{lemma-bound-B} we conclude that  $\gamma_{_I}(G\circ_v H)=n(G)(\gamma_{_I}(H)-1)+\gamma_{_I}(G)$.

 \vspace{0,3cm}
\noindent Case 4. $\mathcal{A}_f=\mathcal{A}_f^{1,m}\cup\mathcal{A}_f^{2,m+1}$. In this case,  $$|\mathcal{A}_f^{1,m}|+2|\mathcal{A}_f^{2,m+1}|=\sum_{x\in \mathcal{A}_f}f(x)\ge \gamma_{_I}(G).$$ Thus,
 \begin{align*}
\gamma_{_I}(G\circ_v H)&=|\mathcal{A}_f^{1,m}|\gamma_{_I}(H)+|\mathcal{A}_f^{2,m+1}|(\gamma_{_I}(H)+1)+|\mathcal{B}_f|(\gamma_{_I}(H)-1)\\&=n(G)(\gamma_{_I}(H)-1)+|\mathcal{A}_f^{1,m}|+2|\mathcal{A}_f^{2,m+1}|\\
&\ge n(G)(\gamma_{_I}(H)-1)+\gamma_{_I}(G).
\end{align*} 
Finally,  by Proposition \ref{lemma-bound-B} we conclude that  $\gamma_{_I}(G\circ_v H)=n(G)(\gamma_{_I}(H)-1)+\gamma_{_I}(G)$.

Therefore, $\gamma_{_I}(G\circ_v H)\in \{n(G)(\gamma_{_I}(H)-1)+\gamma(G), n(G)(\gamma_{_I}(H)-1)+\gamma_{_I}(G), n(G)\gamma_{_I}(H)\}.$
The remaining statements follow from the previous analysis.
\end{proof}

\begin{corollary} \label{The cae H=K_2}
For any graph $G$ and $v\in V(K_2)$, $$\gamma_{_I}(G\circ_v K_2)=n(G)+\gamma(G).$$
\end{corollary}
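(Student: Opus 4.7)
The plan is to combine the Trichotomy Theorem (Theorem~\ref{teo-bounds-rooted}) with a direct construction. First I would specialize the trichotomy to the case $H = K_2$. Since $\gamma_{_I}(K_2)=2$ (for instance, assigning $1$ to each endpoint gives an IDF of minimum weight), the three possible values of $\gamma_{_I}(G\circ_v K_2)$ collapse to
\[
n(G)+\gamma(G),\qquad n(G)+\gamma_{_I}(G),\qquad 2n(G).
\]
Using the well known inequalities $\gamma(G)\le \gamma_{_I}(G)$ and $\gamma(G)\le n(G)$, the smallest of these three candidates is $n(G)+\gamma(G)$. Hence the trichotomy already gives the lower bound $\gamma_{_I}(G\circ_v K_2)\ge n(G)+\gamma(G)$ for free.

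For the matching upper bound I would produce an explicit Italian dominating function. Let $S$ be a $\gamma(G)$-set, and for every $x\in V(G)$ let $x'$ denote the unique neighbour of $x$ in $H_x$ (the ``leaf'' vertex of the copy of $K_2$ attached at $x$). Define $f$ on $G\circ_v K_2$ by
\[
f(x')=1 \text{ for all } x\in V(G),\qquad f(x)=\begin{cases} 1 & \text{if } x\in S,\\ 0 & \text{if } x\notin S.\end{cases}
\]
Then $\omega(f)=n(G)+|S|=n(G)+\gamma(G)$, and $f$ is an IDF: the only vertices in $V_0$ are the vertices $x\in V(G)\setminus S$, and each such $x$ has $f(x')=1$ together with some neighbour $y\in N(x)\cap S$ with $f(y)=1$, so $f(N(x))\ge 2$. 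The leaf vertices $x'$ all belong to $V_1$ and need no verification.

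Combining both bounds yields the desired equality. The only delicate point is lining up the three candidate values correctly and noting that the inequalities $\gamma(G)\le\gamma_{_I}(G)$ and $\gamma(G)\le n(G)$ make $n(G)+\gamma(G)$ the minimum; after that the construction is routine, so I do not anticipate any real obstacle in this corollary.
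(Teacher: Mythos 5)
Your proposal is correct and follows essentially the same route as the paper: the lower bound comes from the Trichotomy Theorem (all three candidate values are at least $n(G)+\gamma(G)$ since $\gamma_{_I}(K_2)=2$, $\gamma(G)\le\gamma_{_I}(G)$ and $\gamma(G)\le n(G)$), and the upper bound comes from the same construction assigning $1$ to every leaf of the copies of $K_2$ and to the vertices of a $\gamma(G)$-set. No issues.
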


\begin{proof}
By Theorem \ref{teo-bounds-rooted} we have that $\gamma_{_I}(G\circ_v K_2)\ge n(G)+\gamma(G).$ To conclude the proof we only need to observe that from any $\gamma(G)$-set $D$ we can define an IDF $f(W_0,W_1,W_2)$ on $G\circ_vK_2$ in such a way that $W_0=V(G)\setminus D$ and $W_2=\emptyset$. Since $\gamma_{_I}(G\circ_v K_2)\le \omega(f)= n(G)+\gamma(G)$, the result follows.
\end{proof}

\begin{corollary}
Let  $G$ and $H$ be two graphs and let $v\in V(H)$. If $n(H)\ge 3$, then $\gamma_{_I}(G\circ_v H)\ge 2n(G)$ and the equality holds if and only if $\gamma_{_I}(H)=2.$
\end{corollary}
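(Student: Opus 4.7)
The plan is to invoke Theorem \ref{teo-bounds-rooted} and reduce the problem to checking each of its three cases against the bound $2n(G)$ under the hypothesis $n(H)\ge 3$.

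The first observation I would make is that $n(H)\ge 3$ rules out $H\in\{K_1,K_2,\overline{K_2}\}$, so Corollary \ref{Corollary-of-Lemma1} guarantees that $\gamma_{_I}(H)\ge 3$ whenever some $\gamma_{_I}(G\circ_v H)$-function $f$ satisfies $\mathcal{B}_f\ne\emptyset$. A short independent argument (an IDF of weight $1$ would label a single vertex with a $1$, failing the Italian condition at any other vertex) shows $\gamma_{_I}(H)\ge 2$ as soon as $n(H)\ge 2$.

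Next I would run through the three cases in the trichotomy. In the two cases $\gamma_{_I}(G\circ_v H)=n(G)(\gamma_{_I}(H)-1)+\gamma(G)$ and $\gamma_{_I}(G\circ_v H)=n(G)(\gamma_{_I}(H)-1)+\gamma_{_I}(G)$, we have $\gamma_{_I}(H)\ge 3$ by the previous paragraph, and since both $\gamma(G)\ge 1$ and $\gamma_{_I}(G)\ge 1$ whenever $n(G)\ge 1$, we obtain $\gamma_{_I}(G\circ_v H)\ge 2n(G)+1>2n(G)$. In the remaining case $\gamma_{_I}(G\circ_v H)=n(G)\gamma_{_I}(H)$, the bound $\gamma_{_I}(H)\ge 2$ gives $\gamma_{_I}(G\circ_v H)\ge 2n(G)$, with equality if and only if $\gamma_{_I}(H)=2$.

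For the converse direction, if $\gamma_{_I}(H)=2$ then the first two cases of the trichotomy are impossible (they force $\gamma_{_I}(H)\ge 3$), so we must be in the third case and the equality $\gamma_{_I}(G\circ_v H)=2n(G)$ follows automatically. I do not anticipate any real obstacle: the argument is a bookkeeping exercise built on top of the trichotomy, with Corollary \ref{Corollary-of-Lemma1} doing the essential work of excluding small values of $\gamma_{_I}(H)$ in the first two cases.
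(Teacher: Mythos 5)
Your proof is correct and follows essentially the same route as the paper: both split according to the trichotomy of Theorem \ref{teo-bounds-rooted} and use Corollary \ref{Corollary-of-Lemma1} (via the fact that $n(H)\ge 3$ excludes $K_1$, $K_2$, $\overline{K_2}$) to force $\gamma_{_I}(H)\ge 3$ in the cases where $\mathcal{B}_f\neq\emptyset$, yielding a strict excess over $2n(G)$ there. The only point worth making explicit is that the passage from ``the value is $n(G)(\gamma_{_I}(H)-1)+\gamma(G)$ or $n(G)(\gamma_{_I}(H)-1)+\gamma_{_I}(G)$ but not $n(G)\gamma_{_I}(H)$'' to ``$\mathcal{B}_f\ne\emptyset$'' uses the ``Furthermore'' clause of the trichotomy, which you invoke implicitly.
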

\begin{proof}
Let $f$ be a $\gamma_{_I}(G\circ_v H)$-function. By Theorem \ref{teo-bounds-rooted} we differentiate two cases. First, if $\gamma_{_I}(G\circ_v H)= n(G)\gamma_{_I}(H)$, then we immediately conclude that $\gamma_{_I}(G\circ_v H)\ge  2 n(G)$ and the equality holds if and only if $\gamma_{_I}(H)=2.$

Now, if $\gamma_{_I}(G\circ_v H)\ne n(G)\gamma_{_I}(H)$, then 
Theorem \ref{teo-bounds-rooted} and Corollary \ref{Corollary-of-Lemma1} lead to $\gamma_{_I}(G\circ_v H)\ge n(G)(\gamma_{_I}(H)-1)+\gamma(G)\ge 2n(G)+\gamma(G)>2n(G)$.
Therefore, the result follows.
\end{proof}

From the results above we can summarize the case where $\gamma_{_I}(H)=2$ as follows. 

\begin{theorem}\label{RemarkCase=2paraH}
Let $G$ and $H$ be two graphs. If $\gamma_{_I}(H)=2$, then 
$$\gamma_{_I}(G\circ_v H)= \left \{ \begin{array}{lll}
 n(G)+\gamma(G), &  \text{if and only if }H\cong K_2 ;\\
 \\
n(G)+\gamma_{_I}(G), &  \text{if and only if }H\cong \overline{K}_2;
\\
\\ 2n(G), &  \text{otherwise.}
\end{array} \right.$$
\end{theorem}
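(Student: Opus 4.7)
The plan is to apply the Trichotomy Theorem~\ref{teo-bounds-rooted} as the starting point: substituting $\gamma_{_I}(H)=2$ into its three possible values yields exactly the three numbers $n(G)+\gamma(G)$, $n(G)+\gamma_{_I}(G)$, and $2n(G)$ appearing in the statement. The task then reduces to matching each isomorphism class of $H$ (subject to $\gamma_{_I}(H)=2$) with the correct formula, which I will do in three disjoint cases that together exhaust the possibilities.

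The case $H\cong K_2$ is essentially free, as it is exactly Corollary~\ref{The cae H=K_2}. For the case $H\cong \overline{K}_2$, I would argue directly from the structure of the rooted product: since the two vertices of $\overline{K}_2$ are non-adjacent, for each $x\in V(G)$ the non-root vertex of $H_x$ has empty neighborhood in $G\circ_v\overline{K}_2$. Thus $G\circ_v\overline{K}_2$ is the disjoint union of $G$ with $n(G)$ isolated vertices. Every isolated vertex must receive weight at least $1$ under any IDF (its empty neighborhood forces it out of $V_0$), while the restriction of any IDF to $V(G)$ is itself an IDF on $G$; this gives the lower bound $n(G)+\gamma_{_I}(G)$, and extending a $\gamma_{_I}(G)$-function by $1$ on each isolated vertex realizes it.

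For the remaining case, where $\gamma_{_I}(H)=2$ but $H\not\cong K_2$ and $H\not\cong\overline{K}_2$, note that $H\notin\{K_1,K_2,\overline{K}_2\}$ and $\gamma_{_I}(H)<3$. Hence Corollary~\ref{Corollary-of-Lemma1} rules out $\mathcal{B}_f\neq\emptyset$ for any $\gamma_{_I}(G\circ_v H)$-function $f$, and the last bullet of Theorem~\ref{teo-bounds-rooted} then forces $\gamma_{_I}(G\circ_v H)=n(G)\gamma_{_I}(H)=2n(G)$. Because the three isomorphism cases for $H$ are mutually exclusive and exhaustive under the hypothesis $\gamma_{_I}(H)=2$, each forward implication above simultaneously supplies the converse of the other two, yielding all three ``if and only if'' statements.

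The main (and essentially only) obstacle is the $\overline{K}_2$ case, since it is the one situation in which the Trichotomy alone does not immediately single out the correct formula; once one recognizes that the rooted product introduces a genuine isolated vertex per copy, the computation collapses to the additivity of $\gamma_{_I}$ over disjoint unions. The other two cases are direct consequences of previously established results.
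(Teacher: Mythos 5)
Your proposal is correct and follows essentially the same route the paper intends: the paper gives no explicit proof, merely asserting the theorem ``from the results above,'' and those results are exactly the ones you invoke (the Trichotomy, Corollary~\ref{The cae H=K_2} for $K_2$, and the $\mathcal{B}_f=\emptyset$ mechanism via Corollary~\ref{Corollary-of-Lemma1} for the remaining case), while your direct disjoint-union computation for $\overline{K}_2$ supplies the one case the paper leaves entirely implicit. The only caveat concerns your final inference that mutual exclusivity and exhaustiveness of the three cases yield all three converses: that step needs the three values $n(G)+\gamma(G)$, $n(G)+\gamma_{_I}(G)$ and $2n(G)$ to be pairwise distinct, which can fail for small $G$ (for $G=K_1$ all three equal $2$, so e.g.\ $H=P_3$ attains $n(G)+\gamma(G)$ without being $K_2$); this is a defect of the theorem's ``if and only if'' phrasing that the paper itself shares, not a flaw specific to your argument.
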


%We now proceed to consider the different cases of $\gamma_{_I}(G\circ_v H)$. 

From now on, the graph obtained from $H$ by removing vertex $v$ will be denoted by $H-\{v\}$. Notice that any $\gamma_{_I}(H-\{v\})$-function  can be extended to an IDF on $H$ by assigning the value $1$ to $v$, which implies that the following lemma holds. 

\begin{lemma}\label{m-1}
For any nontrivial graph $H$  and any $v\in V(H)$,
$$\gamma_{_I}(H-\{v\})\ge \gamma_{_I}(H)-1.$$
\end{lemma}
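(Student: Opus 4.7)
The plan is to prove the inequality by exhibiting an explicit IDF on $H$ built from an optimal IDF on $H-\{v\}$, as suggested in the sentence just preceding the lemma. Concretely, I would start with any $\gamma_{_I}(H-\{v\})$-function $g$ and define a function $f$ on $V(H)$ by setting $f(v)=1$ and $f(u)=g(u)$ for every $u\in V(H)\setminus\{v\}$.

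The main verification is then that $f$ is an IDF on $H$. I would argue it as follows. Let $u\in V(H)$ with $f(u)=0$; note that necessarily $u\neq v$, since $f(v)=1$. Because $g$ is an IDF on $H-\{v\}$ and $g(u)=f(u)=0$, we have
\[
\sum_{w\in N_{H-\{v\}}(u)} g(w)\ge 2.
\]
Since $N_H(u)\supseteq N_{H-\{v\}}(u)$ and $f$ agrees with $g$ on $V(H)\setminus\{v\}$, this gives $\sum_{w\in N_H(u)} f(w)\ge 2$, as required. Hence $f$ is indeed an IDF on $H$, and its weight satisfies $\omega(f)=\omega(g)+1=\gamma_{_I}(H-\{v\})+1$. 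Therefore $\gamma_{_I}(H)\le \gamma_{_I}(H-\{v\})+1$, which is the claimed inequality after rearrangement.

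I do not anticipate any real obstacle here: the only subtle point is that $u=v$ never needs to be checked in the IDF condition because we have assigned $f(v)=1>0$, so $v\notin V_0$ under $f$. The hypothesis that $H$ is nontrivial is only used to ensure that $H-\{v\}$ is a graph on which $\gamma_{_I}$ is defined (i.e.\ that $V(H-\{v\})\neq\emptyset$), so that $g$ exists.
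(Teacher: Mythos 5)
Your proof is correct and is exactly the argument the paper intends: the paper only sketches it in the sentence preceding the lemma (extend a $\gamma_{_I}(H-\{v\})$-function to $H$ by assigning $1$ to $v$), and you have simply carried out the verification in full. No issues.
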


In order to stablish a sufficient and necessary condition to assure that $\gamma_{_I}(G\circ_v H)=n(G)\gamma_{_I}(H)$ when $\gamma_{_I}(G)<n(G)$, we need to state the following lemma.

\begin{lemma}\label{LemmaBf-empty}Let $f$ be a $\gamma_{_I}(G\circ_v H)$-function. If $\mathcal{B}_f\ne \emptyset$, then $\gamma_{_I}(H-\{v\})= \gamma_{_I}(H)-1$. 
\end{lemma}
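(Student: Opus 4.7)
The plan is to combine the already-established lower bound $\gamma_{_I}(H-\{v\})\geq \gamma_{_I}(H)-1$ from Lemma \ref{m-1} with a matching upper bound produced by restricting $f$ to a copy of $H$ indexed by a vertex of $\mathcal{B}_f$. In other words, I would exhibit an IDF on $H-\{v\}$ of weight exactly $\gamma_{_I}(H)-1$.

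First I would pick any $x\in \mathcal{B}_f$, which exists by hypothesis. By Lemma \ref{restriction-H}, $\omega(f_x)=\gamma_{_I}(H)-1$ and $f(x)=0$, so $\omega(f_x^{-})=\omega(f_x)=\gamma_{_I}(H)-1$. Since $H_x\cong H$ with $x$ playing the role of $v$, the function $f_x^{-}$ is naturally a function on $V(H-\{v\})$ of weight $\gamma_{_I}(H)-1$; it remains to check that it is an IDF on $H-\{v\}$.

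The key structural observation is that in $G\circ_v H$ the only vertex of $V(H_x)$ that can have neighbours outside $V(H_x)$ is the root $x$ itself. Hence, for every $u\in V(H_x)\setminus\{x\}$ one has $N_{G\circ_v H}(u)\subseteq V(H_x)$, and moreover $N_{H_x-\{x\}}(u)=N_{H_x}(u)\setminus\{x\}$. Now fix $u\in V(H_x)\setminus\{x\}$ with $f_x^{-}(u)=0$. Since $f$ is an IDF on $G\circ_v H$,
\[
2\leq \sum_{w\in N_{G\circ_v H}(u)} f(w)=\sum_{w\in N_{H_x}(u)} f_x(w),
\]
and because $f(x)=0$ we may drop $x$ from this sum without changing its value, obtaining $\sum_{w\in N_{H_x-\{x\}}(u)} f_x^{-}(w)\geq 2$. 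Therefore $f_x^{-}$ is an IDF on $H_x-\{x\}\cong H-\{v\}$, which yields $\gamma_{_I}(H-\{v\})\leq \gamma_{_I}(H)-1$. Combined with Lemma \ref{m-1}, equality follows.

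The main (and really only) obstacle is the small bookkeeping point that $f_x^{-}$ continues to satisfy the IDF inequality after deleting $x$ from the graph, and this is exactly where the conditions $f(x)=0$ (from Lemma \ref{restriction-H}) and the ``no edges leaving $V(H_x)$ except at $x$'' feature of the rooted product are used together. Once this is observed, the proof is essentially immediate.
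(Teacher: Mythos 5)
Your proposal is correct and follows essentially the same route as the paper: take $x\in\mathcal{B}_f$, use Lemma \ref{restriction-H} to get $\omega(f_x)=\gamma_{_I}(H)-1$ and $f(x)=0$, conclude that $f_x^-$ is an IDF on $H_x-\{x\}$ of weight $\gamma_{_I}(H)-1$, and combine with Lemma \ref{m-1}. You simply spell out the verification that $f_x^-$ remains an IDF after deleting the root, a step the paper leaves implicit.
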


\begin{proof}
If there exists $x\in \mathcal{B}_f$, then $\omega(f_x)=\gamma_{_I}(H)-1$ and  $f(x)=0$ (by Lemma \ref{restriction-H}), which implies  that $f_x^-$ is an IDF on $H_x-\{x\}$ of weight $\gamma_{_I}(H)-1$, and so  $\gamma_{_I}(H-\{v\})\le \gamma_{_I}(H)-1$. By Lemma \ref{m-1} we conclude the proof. 
\end{proof}

The following result is straightforward.
\begin{remark}\label{Igualdad-al-orden}
  $\gamma_{_I}(G)=n(G)$ if and only if $G$ has maximum degree $\delta_{\max}(G)\le 1$.
  \end{remark}

\begin{theorem}\label{characterization1}
Let $G$ be a  graph of maximum degree $\delta_{\max}(G)\ge 2$. Given a graph $H$ and a vertex $v\in V(H)$, $\gamma_{_I}(G\circ_v H)=n(G)\gamma_{_I}(H)$ if and only if $\gamma_{_I}(H-\{v\})\ge \gamma_{_I}(H)$.
\end{theorem}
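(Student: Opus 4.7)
The plan is to prove the two directions separately, exploiting the machinery already built in the paper. The backward direction is essentially a one-line corollary of the earlier lemmas: assume $\gamma_{_I}(H-\{v\})\ge \gamma_{_I}(H)$. Since by Lemma \ref{m-1} we always have $\gamma_{_I}(H-\{v\})\ge \gamma_{_I}(H)-1$, this hypothesis is equivalent to $\gamma_{_I}(H-\{v\})\ne \gamma_{_I}(H)-1$. The contrapositive of Lemma \ref{LemmaBf-empty} then gives that $\mathcal{B}_f=\emptyset$ for every $\gamma_{_I}(G\circ_v H)$-function $f$, and the last bullet of Theorem \ref{teo-bounds-rooted} delivers $\gamma_{_I}(G\circ_v H)=n(G)\gamma_{_I}(H)$.

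For the forward direction I would argue by contradiction. Suppose that $\gamma_{_I}(G\circ_v H)=n(G)\gamma_{_I}(H)$ but $\gamma_{_I}(H-\{v\})<\gamma_{_I}(H)$, so by Lemma \ref{m-1} we have $\gamma_{_I}(H-\{v\})=\gamma_{_I}(H)-1$. Let $g$ be a $\gamma_{_I}(H-\{v\})$-function and let $h$ be a $\gamma_{_I}(G)$-function. I would construct an IDF $f$ on $G\circ_v H$ by setting $f(x)=h(x)$ for each $x\in V(G)$ and taking $f_x^-$ to be induced by $g$ on every copy $H_x$. Its weight is $n(G)(\gamma_{_I}(H)-1)+\gamma_{_I}(G)$, so the hypothesis forces $\gamma_{_I}(G)\ge n(G)$. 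Since trivially $\gamma_{_I}(G)\le n(G)$ (assign $1$ everywhere), equality holds and Remark \ref{Igualdad-al-orden} yields $\delta_{\max}(G)\le 1$, contradicting the standing assumption on $G$.

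The only nontrivial step is verifying that the constructed $f$ is indeed an IDF, and this is the place where I would be careful. For an interior vertex $w\in V(H_x)\setminus\{x\}$ with $f(w)=g(w)=0$, its neighborhood in $G\circ_v H$ coincides with $N_H(w)$ inside $H_x$, which contains $N_{H-\{v\}}(w)$ possibly together with $x$; since $g$ is an IDF on $H-\{v\}$, already the $N_{H-\{v\}}(w)$-sum is at least $2$, so $f(N(w))\ge 2$. For a vertex $x\in V(G)$ with $f(x)=h(x)=0$, its neighborhood in $G\circ_v H$ consists of $N_G(x)\subseteq V(G)$ together with the copy of $N_H(v)$ in $H_x$; since $h$ is an IDF on $G$ and $x\in V_0$ with respect to $h$, we already have $h(N_G(x))\ge 2$, which suffices regardless of what $g$ does on $N_H(v)$.

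I expect the main obstacle to be the bookkeeping in this last verification rather than any deep idea: one must keep straight that the identification of $x$ with $v$ means a $V_0$-vertex of $V(G)$ is simultaneously the root of its copy $H_x$, so both the $G$-neighbors and the $H$-neighbors of $v$ contribute to $f(N(x))$, while for genuinely interior vertices only the $H$-part contributes. Once this verification is in place, the rest reduces to arithmetic comparison of weights together with the $\gamma_{_I}(G)\le n(G)$ bound and the characterization in Remark \ref{Igualdad-al-orden}.
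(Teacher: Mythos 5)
Your proposal is correct and follows essentially the same route as the paper: the backward direction via the contrapositive of Lemma \ref{LemmaBf-empty} together with Theorem \ref{teo-bounds-rooted}, and the forward direction by building an IDF of weight $n(G)(\gamma_{_I}(H)-1)+\gamma_{_I}(G)$ from a $\gamma_{_I}(H-\{v\})$-function and a $\gamma_{_I}(G)$-function, then invoking Remark \ref{Igualdad-al-orden}. Your explicit verification that the constructed function is an IDF is a detail the paper leaves implicit, but the argument is the same.
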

\begin{proof} Suppose  that $\gamma_{_I}(H-\{v\})< \gamma_{_I}(H)$. In such a case,  $\gamma_{_I}(H-\{v\})=\gamma_{_I}(H)-1$ by Lemma \ref{m-1}. Hence,  from any $\gamma_{_I}(H-\{v\})$-function and any $\gamma_{_I}(G)$-function we can construct an IDF on $G\circ_v H$ of weight $n(G)(\gamma_{_I}(H)-1)+\gamma_{_I}(G)$, which implies  that $ \gamma_{_I}(G\circ_v H)\le  n(G)(\gamma_{_I}(H)-1)+\gamma_{_I}(G)$, and by Remark \ref{Igualdad-al-orden} we deduce that  $ \gamma_{_I}(G\circ_v H)<n(G)\gamma_{_I}(H)$. Therefore, if $\gamma_{_I}(G\circ_v H)=n(G)\gamma_{_I}(H)$, then  $\gamma_{_I}(H-\{v\})\ge \gamma_{_I}(H)$.

Now, assume that $\gamma_{_I}(H-\{v\})\ge \gamma_{_I}(H)$ and let $f$ be  a $\gamma_{_I}(G\circ_v H)$-function. By Lemma \ref{LemmaBf-empty} we have that    $\mathcal{B}_f=\emptyset$, and so Theorem \ref{teo-bounds-rooted} leads to
 $\gamma_{_I}(G\circ_v H)=n(G)\gamma_{_I}(H)$.
\end{proof}

It was shown in  \cite{CHELLALI201622}  that $\gamma_{_I}(C_t)=\left\lceil \frac{t}{2}\right\rceil$ for every $t\geq 3$ and $\gamma_{_I}(P_t)=\left\lceil \frac{t+1}{2}\right\rceil$ for every $t\geq 1$. Since $\gamma_{_I}(C_{t}-\{v\})=\gamma_{_I}(P_{t-1})=\left\lceil \frac{t}{2}\right\rceil=\gamma_{_I}(C_{t})$ for every $t\geq 3$,   Theorem \ref{characterization1} leads to the following result.

\begin{corollary}
If $G$ be a graph, $v\in V(C_t)$ and $t\geq 3$, then
$$\gamma_{_I}(G\circ_v C_t)= 
 n(G)\left\lceil \frac{t}{2}\right\rceil .$$
\end{corollary}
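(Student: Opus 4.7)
The plan is to reduce the corollary to the characterization result Theorem \ref{characterization1}, with the crucial input being the exact value of $\gamma_{_I}(C_t-\{v\})$.

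First I would compute the Italian domination number of $C_t$ with a vertex deleted. Since $C_t-\{v\}\cong P_{t-1}$ for every $v\in V(C_t)$, the formula $\gamma_{_I}(P_s)=\lceil (s+1)/2\rceil$ recalled just before the corollary yields $\gamma_{_I}(C_t-\{v\})=\lceil t/2\rceil$, which coincides with $\gamma_{_I}(C_t)$. In particular, $\gamma_{_I}(C_t-\{v\})\geq \gamma_{_I}(C_t)$, so whenever $\delta_{\max}(G)\geq 2$, Theorem \ref{characterization1} directly gives $\gamma_{_I}(G\circ_v C_t)=n(G)\gamma_{_I}(C_t)=n(G)\lceil t/2\rceil$.

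The only remaining case is $\delta_{\max}(G)\leq 1$, which falls outside the hypothesis of Theorem \ref{characterization1}. For this range I would instead appeal to Lemma \ref{LemmaBf-empty} contrapositively: since $\gamma_{_I}(C_t-\{v\})=\gamma_{_I}(C_t)\neq \gamma_{_I}(C_t)-1$, no $\gamma_{_I}(G\circ_v C_t)$-function $f$ can satisfy $\mathcal{B}_f\neq\emptyset$. The trichotomy statement Theorem \ref{teo-bounds-rooted} then collapses to $\gamma_{_I}(G\circ_v C_t)=n(G)\gamma_{_I}(C_t)=n(G)\lceil t/2\rceil$, completing the proof.

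I do not anticipate any substantive obstacle: once the identity $\gamma_{_I}(C_t-\{v\})=\gamma_{_I}(C_t)$ is in hand, the result is a direct translation through Theorem \ref{characterization1} (for $\delta_{\max}(G)\geq 2$) or through Lemma \ref{LemmaBf-empty} paired with Theorem \ref{teo-bounds-rooted} (uniformly, thereby also covering the low-degree case). In fact, the second route handles every $G$ at once, so one could streamline the write-up by using only Lemma \ref{LemmaBf-empty} and Theorem \ref{teo-bounds-rooted} and dispense with the case split entirely.
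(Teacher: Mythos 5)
Your proof is correct, and its first half is exactly the paper's argument: compute $\gamma_{_I}(C_t-\{v\})=\gamma_{_I}(P_{t-1})=\left\lceil t/2\right\rceil=\gamma_{_I}(C_t)$ and feed this into Theorem \ref{characterization1}. The worthwhile difference is that you noticed Theorem \ref{characterization1} carries the hypothesis $\delta_{\max}(G)\ge 2$, whereas the corollary is stated for an arbitrary graph $G$; the paper's one-line derivation silently skips over the case $\delta_{\max}(G)\le 1$. Your patch via the contrapositive of Lemma \ref{LemmaBf-empty} (since $\gamma_{_I}(C_t-\{v\})\neq\gamma_{_I}(C_t)-1$, every optimal function has $\mathcal{B}_f=\emptyset$) combined with the ``furthermore'' part of Theorem \ref{teo-bounds-rooted} is sound, needs no degree assumption on $G$, and as you observe it subsumes the first route entirely, so the cleanest write-up would use only that argument. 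In short: same core idea as the paper, but your version is slightly more careful and actually proves the statement in the generality in which it is asserted.
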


From Lemma \ref{m-1} and Theorems \ref{teo-bounds-rooted} and \ref{characterization1} we deduce the following result. 

\begin{theorem}\label{Characterization1.1}
Let $G$ be a graph of maximum degree $\delta_{\max}(G)\ge 2$.  Given a graph $H$ and a  vertex $v\in V(H)$,  the following statements are equivalent. 
\begin{itemize}
\item $\gamma_{_I}(G\circ_v H)= n(G)(\gamma_{_I}(H)-1)+\gamma_{_I}(G)$ or $\gamma_{_I}(G\circ_v H)= n(G)(\gamma_{_I}(H)-1)+\gamma(G)$.
\item $\gamma_{_I}(H-\{v\})=\gamma_{_I}(H)-1$.
\end{itemize}
\end{theorem}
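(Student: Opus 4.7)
The proof is essentially a direct combination of the three earlier results hinted at: the trichotomy (Theorem \ref{teo-bounds-rooted}), the bound from Lemma \ref{m-1}, and the characterization in Theorem \ref{characterization1}. The plan is to observe that the trichotomy splits the possible values of $\gamma_{_I}(G\circ_v H)$ into exactly three options, of which the first statement is the disjunction of the first two and the negation of $\gamma_{_I}(G\circ_v H)=n(G)\gamma_{_I}(H)$; then translate this negation via Theorem \ref{characterization1} into a statement about $\gamma_{_I}(H-\{v\})$.

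More concretely, the argument I would write is as follows. First, by Lemma \ref{m-1}, the only two possible values of $\gamma_{_I}(H-\{v\})$ compatible with the inequality $\gamma_{_I}(H-\{v\})\ge \gamma_{_I}(H)-1$ relative to $\gamma_{_I}(H)$ are $\gamma_{_I}(H-\{v\})=\gamma_{_I}(H)-1$ and $\gamma_{_I}(H-\{v\})\ge \gamma_{_I}(H)$. So the condition $\gamma_{_I}(H-\{v\})=\gamma_{_I}(H)-1$ is equivalent to $\gamma_{_I}(H-\{v\})<\gamma_{_I}(H)$, i.e., to the negation of $\gamma_{_I}(H-\{v\})\ge \gamma_{_I}(H)$.

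Next, since $\delta_{\max}(G)\ge 2$, Theorem \ref{characterization1} applies and tells us that $\gamma_{_I}(H-\{v\})\ge \gamma_{_I}(H)$ if and only if $\gamma_{_I}(G\circ_v H)=n(G)\gamma_{_I}(H)$. Taking contrapositives, $\gamma_{_I}(H-\{v\})=\gamma_{_I}(H)-1$ if and only if $\gamma_{_I}(G\circ_v H)\neq n(G)\gamma_{_I}(H)$. By the trichotomy of Theorem \ref{teo-bounds-rooted}, the inequality $\gamma_{_I}(G\circ_v H)\neq n(G)\gamma_{_I}(H)$ is in turn equivalent to the assertion that $\gamma_{_I}(G\circ_v H)$ equals either $n(G)(\gamma_{_I}(H)-1)+\gamma(G)$ or $n(G)(\gamma_{_I}(H)-1)+\gamma_{_I}(G)$, which is precisely the first bullet in the statement.

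Chaining these two equivalences completes the proof. There is no real obstacle here since all the substantive work has been done in the prior lemmas; the only point requiring a line of care is confirming that the three values in the trichotomy exhaust the possibilities and that the third is mutually exclusive from the first two, so that $\gamma_{_I}(G\circ_v H)\neq n(G)\gamma_{_I}(H)$ is truly equivalent to membership in the set $\{n(G)(\gamma_{_I}(H)-1)+\gamma(G),\, n(G)(\gamma_{_I}(H)-1)+\gamma_{_I}(G)\}$. (Note that the two values in the first bullet may in principle coincide, but that causes no problem for the equivalence, since the statement is formulated as a disjunction.) Thus the theorem is essentially a formal corollary of Lemma \ref{m-1}, Theorem \ref{teo-bounds-rooted}, and Theorem \ref{characterization1}.
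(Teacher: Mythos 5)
Your proposal is correct and follows exactly the route the paper intends, since the paper itself states this theorem as a direct consequence of Lemma \ref{m-1}, Theorem \ref{teo-bounds-rooted} and Theorem \ref{characterization1} without further argument. The one ``line of care'' you flag (that the two values in the first bullet differ from $n(G)\gamma_{_I}(H)$) is supplied by Remark \ref{Igualdad-al-orden}: $\delta_{\max}(G)\ge 2$ gives $\gamma(G)\le\gamma_{_I}(G)<n(G)$, so both candidate values are strictly smaller than $n(G)\gamma_{_I}(H)$.
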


We now focus on the case of graphs $G$ with 
%$\delta_{\max}(G)\ge 2$ and 
 $\gamma_{_I}(G)>\gamma(G)$. 

\begin{theorem}\label{characterization2}
Let $G$ be a graph  of maximum degree  $\delta_{\max}(G)\ge 2$  with  $ \gamma_{_I}(G)>\gamma(G)$.  For any graph $H$ and any vertex $v\in V(H)$, $\gamma_{_I}(G\circ_v H)=n(G)(\gamma_{_I}(H)-1)+\gamma(G)$ if and only if  $\gamma_{_I}(H-\{v\})=\gamma_{_I}(H)-1$ and one of the following conditions holds.
\begin{enumerate}[{\rm (i)}]
 \item   There exists a $\gamma_{_I}(H-\{v\})$-function $g$ such that $g(y)>0$ for some $y\in N(v)$.
 \item  There exists a $\gamma_{_I}(H)$-function $h$ such that $h(v)=2$.
\end{enumerate}
\end{theorem}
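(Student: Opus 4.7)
The plan is to reduce both directions to the structural analysis carried out in the proof of Theorem \ref{teo-bounds-rooted}. Under the hypotheses $\delta_{\max}(G)\ge 2$ and $\gamma_{_I}(G)>\gamma(G)$ we have $\gamma(G)<\gamma_{_I}(G)<n(G)$ (the second inequality via Remark \ref{Igualdad-al-orden}), so the three admissible values of the trichotomy satisfy the strict chain
$$n(G)(\gamma_{_I}(H)-1)+\gamma(G)<n(G)(\gamma_{_I}(H)-1)+\gamma_{_I}(G)<n(G)\gamma_{_I}(H),$$
and the target value is the strict minimum of the three.

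For the forward direction, let $f$ be a $\gamma_{_I}(G\circ_v H)$-function of weight $n(G)(\gamma_{_I}(H)-1)+\gamma(G)$. If $\mathcal{B}_f=\emptyset$ then Theorem \ref{teo-bounds-rooted} would give $\omega(f)=n(G)\gamma_{_I}(H)$, contradicting the chain, so $\mathcal{B}_f\neq\emptyset$, and Lemma \ref{LemmaBf-empty} yields $\gamma_{_I}(H-\{v\})=\gamma_{_I}(H)-1$. We then mirror the case split in the trichotomy proof. If there exist $x\in\mathcal{B}_f$ and $y\in N(x)\cap V(H_x)$ with $f(y)>0$, then (since $f(x)=0$ by Lemma \ref{restriction-H}) the function $f_x^-$ is an IDF on $H_x-\{x\}$ of weight $\gamma_{_I}(H)-1$ taking a positive value on a neighbour of $v$, giving condition (i). Otherwise $N(x)\cap V(H_x)\subseteq V_0$ for every $x\in\mathcal{B}_f$ and one of the four cases after Lemma \ref{existenciaf} applies; Cases 1, 3 and 4 all force $\omega(f)=n(G)(\gamma_{_I}(H)-1)+\gamma_{_I}(G)$, again contradicting the chain, so only Case 2 survives. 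Then $\mathcal{A}_f=\mathcal{A}_f^{2,m}$ and, for any $x\in\mathcal{A}_f$, $f_x$ is a $\gamma_{_I}(H)$-function with $f_x(x)=2$, giving condition (ii).

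For the reverse direction, assume $\gamma_{_I}(H-\{v\})=\gamma_{_I}(H)-1$ together with (i) or (ii), and fix a $\gamma(G)$-set $S$. Under (i), pick a $\gamma_{_I}(H-\{v\})$-function $g$ with $g(y_0)>0$ for some $y_0\in N(v)$, and define $f$ on $G\circ_v H$ by placing $g$ on each $H_u-\{u\}$ and setting $f(u)=1$ if $u\in S$ and $f(u)=0$ otherwise. Under (ii), pick a $\gamma_{_I}(H)$-function $h$ with $h(v)=2$ and any $\gamma_{_I}(H-\{v\})$-function $g'$, and define $f$ by placing $h$ on $H_u$ whenever $u\in S$ (so $f(u)=2$) and $g'$ on $H_u-\{u\}$ with $f(u)=0$ whenever $u\notin S$. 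A direct count gives $\omega(f)=n(G)(\gamma_{_I}(H)-1)+\gamma(G)$ in both constructions, and the chain above together with Theorem \ref{teo-bounds-rooted} upgrades this upper bound to the required equality.

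The main obstacle is verifying that $f$ is an IDF at the roots $u\notin S$, where $f(u)=0$ and the $f$-weight on $N(u)$ must reach $2$: in case (i) this is achieved jointly by some $u'\in S\cap N_G(u)$ contributing $f(u')=1$ and the copy of $y_0$ inside $H_u$ contributing $g(y_0)\ge 1$, while in case (ii) the single contribution $f(u')=2$ from $u'\in S\cap N_G(u)$ already suffices. At non-root vertices the IDF condition is inherited from $g$, $g'$ or $h$ via the identity $N_{G\circ_v H}(w)=N_{H_u}(w)$ valid for every $w\in V(H_u)\setminus\{u\}$.
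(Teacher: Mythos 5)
Your proof is correct, and the forward direction follows essentially the paper's own route: establish $\mathcal{B}_f\neq\emptyset$ from the strict chain $\gamma(G)<\gamma_{_I}(G)<n(G)$, obtain $\gamma_{_I}(H-\{v\})=\gamma_{_I}(H)-1$ from Lemma \ref{LemmaBf-empty}, and then read off (i) or (ii) from the case analysis of Theorem \ref{teo-bounds-rooted} (you phrase it as a direct dichotomy where the paper argues by contradiction assuming both (i) and (ii) fail, but that is cosmetic; like the paper, you should say explicitly that you first replace $f$ by a function satisfying Lemma \ref{existenciaf}, noting that this replacement preserves the copies over $\mathcal{B}_f$ and hence the dichotomy). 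Where you genuinely diverge is the sufficiency of condition (ii): the paper proceeds indirectly, showing that every $\gamma_{_I}(G\circ_v H)$-function $g$ has $\mathcal{B}_g\neq\emptyset$, then arguing that $\mathcal{A}_g=\mathcal{A}_g^{2,m}$ and falling back on Case 2 of the trichotomy proof; you instead build an explicit IDF by placing a $\gamma_{_I}(H)$-function $h$ with $h(v)=2$ on the copies rooted at a $\gamma(G)$-set $S$ and a $\gamma_{_I}(H-\{v\})$-function on the remaining copies, and verify the weight $n(G)(\gamma_{_I}(H)-1)+\gamma(G)$ directly. Your construction is cleaner: it does not require assuming that (i) fails, it avoids the paper's somewhat delicate claim that $\mathcal{A}_g^{2,m}\neq\emptyset$ for an arbitrary optimal $g$, and the lower bound needed to upgrade the construction to equality comes for free from the trichotomy, since under the hypotheses the target value is the minimum of the three admissible values. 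Your verification of the IDF condition at the roots $u\notin S$ (weight $2$ arriving from a neighbour in $S$ in case (ii), or jointly from $S$ and the copy of $y_0$ in case (i)) is exactly the point that needs checking, and you handle it correctly.
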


\begin{proof}
Assume that $\gamma_{_I}(G\circ_v H)=n(G)(\gamma_{_I}(H)-1)+\gamma(G)$. By Theorem \ref{Characterization1.1}, $\gamma_{_I}(H-\{v\})=\gamma_{_I}(H)-1$. Suppose by contradiction that conditions (i) and (ii) do not hold.
Let $f$ be a $\gamma_{_I}(G\circ_v H)$-function. Since $\gamma(G)<\gamma_{_I}(G)< n(G)$, we have that $\gamma_{_I}(G\circ_v H)=n(G)(\gamma_{_I}(H)-1)+\gamma(G)<n(G)\gamma_{_I}(H)$,  concluding that $\mathcal{B}_f\neq \emptyset$ by Theorem \ref{teo-bounds-rooted}. We can assume that $f$ satisfies  Lemma \ref{existenciaf} and so $\mathcal{A}_f\in \{\mathcal{A}_f^{1,m},\mathcal{A}_f^{2,m},\mathcal{A}_f^{2,m+1},\mathcal{A}_f^{1,m}\cup\mathcal{A}_f^{2,m+1}\}$. Moreover, $\mathcal{A}_f\neq \mathcal{A}_f^{2,m}$ since  (ii) does not hold. 
%Hence $\mathcal{A}_f\in  \{\mathcal{A}_f^{1,m},\mathcal{A}_f^{2,m+1}, \mathcal{A}_f^{1,m}\cup\mathcal{A}_f^{2,m+1}\}$.
 For any $x\in \mathcal{B}_f$, we have that $f(x)=0$ (by Lemma \ref{restriction-H}), which implies that $f_x^-$ is $\gamma(H-\{x\})-$function, and since (i) does not hold, $N(x)\cap V(H_x)\subseteq V_0$. Hence, we only have to consider Cases 1, 3 and 4 of the proof of Theorem \ref{teo-bounds-rooted}, to obtain that $\gamma_{_I}(G\circ_v H)=n(G)(\gamma_{_I}(H)-1)+\gamma_{_I}(G)$, which is a contradiction as $\gamma(G)<\gamma_{_I}(G)$. Hence, conditions  (i) and (ii) hold.

Now, assume that $\gamma_{_I}(H-\{v\})=\gamma_{_I}(H)-1$. First, suppose that condition (i) holds. So, consider a $\gamma_{_I}(H-\{v\})$-function $h$ such that $h(y)>0$ for some $y\in N(v)$.   Let $S$ be a $\gamma(G)$-set and consider the function $l$ on $G\circ_v H$ such that for every vertex $x\in V(G)$, $l^-_x$ is induced by $h$, $l(x)=1$ if $x\in S$ and $l(x)=0$ if $x\not\in S$. Notice that $l$ is an IDF on $G\circ_v H$ of weight $\omega(l)=n(G)(\gamma_{_I}(H)-1)+\gamma(G)$, which implies that $\gamma_{_I}(G\circ_v H)\leq n(G)(\gamma_{_I}(H)-1)+\gamma(G)$. Thus,  by Theorem \ref{Characterization1.1} we conclude  that $\gamma_{_I}(G\circ_v H)=n(G)(\gamma_{_I}(H)-1)+\gamma(G)$.
Now, suppose that (i) does not hold and (ii) holds. As $\gamma_{_I}(H-\{v\})=\gamma_{_I}(H)-1$ and  $\delta_{\max}(G)\ge 2$, by  Theorem \ref{Characterization1.1} we have  that $\gamma_{_I}(G\circ_v H)<n(G)\gamma_{_I}(H)$.  Hence, by Theorem \ref{teo-bounds-rooted} we conclude that  $\mathcal{B}_g\neq \emptyset$ for every $\gamma_{_I}(G\circ_v H)$-function $g$. We can assume that $g$ satisfies Lemma \ref{existenciaf}, i.e.,   $\mathcal{A}_g\in \{\mathcal{A}_g^{1,m},\mathcal{A}_g^{2,m},\mathcal{A}_g^{2,m+1},\mathcal{A}_g^{1,m}\cup\mathcal{A}_g^{2,m+1}\}$. Moreover,  since condition (ii) holds, we can claim that $\mathcal{A}_g^{2,m}\ne\emptyset$, so that  $\mathcal{A}_g=\mathcal{A}_g^{2,m}$. 
Now, for any $x\in \mathcal{B}_g$, we have that $g(x)=0$ and $g_x^-$ is $\gamma(H-\{x\})$-function and, since (i) does not hold, $N(x)\cap V(H_x)\subseteq V_0$. To conclude the proof we only have to consider  Case 2 of the proof of Theorem \ref{teo-bounds-rooted}, obtaining that $\gamma_{_I}(G\circ_v H)=n(G)(\gamma_{_I}(H)-1)+\gamma(G)$.
\end{proof}

From Theorems \ref{characterization1} and \ref{characterization2} we deduce the following result. 

\begin{theorem}\label{rooted-paths}
Let $G$ be a graph and $t\geq 2$. If $v\in L(P_t)$, then
$$\displaystyle\gamma_{_I}(G\circ_v P_t)= \left \{ \begin{array}{lll}
 n(G)\left\lceil \frac{t+1}{2}\right\rceil, &  t\equiv 1  \pmod 2 ;\\
 \\
n(G) \left\lceil \frac{t}{2}\right\rceil +\gamma(G), &  t\equiv 0  \pmod 2 .
\end{array} \right.$$
Furthermore, if $v\in V(P_t)\setminus L(P_t)$, then
$$\gamma_{_I}(G\circ_v P_t)=n(G)\left\lceil \frac{t+1}{2}\right\rceil$$
\end{theorem}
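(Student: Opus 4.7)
The plan is to apply Theorems \ref{characterization1}, \ref{Characterization1.1}, and \ref{characterization2} with $H=P_t$, so the work reduces to computing $\gamma_{_I}(P_t-\{v\})$ and, in one sub-case, exhibiting an explicit $\gamma_{_I}(P_{t-1})$-function that witnesses condition (i) of Theorem \ref{characterization2}. Throughout I would label $V(P_t)=\{v_1,\ldots ,v_t\}$ with $v_i\sim v_{i+1}$ and use the known values $\gamma_{_I}(P_n)=\lceil (n+1)/2\rceil$ from \cite{CHELLALI201622}.

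For the internal-vertex case $v=v_i$ with $2\le i\le t-1$, deleting $v$ splits the path, so $P_t-\{v\}\cong P_{i-1}\cup P_{t-i}$ and hence
$$\gamma_{_I}(P_t-\{v\})=\left\lceil \tfrac{i}{2}\right\rceil+\left\lceil \tfrac{t-i+1}{2}\right\rceil.$$
A routine parity check in the four cases (parities of $t$ and of $i$) shows this sum is always at least $\lceil (t+1)/2\rceil=\gamma_{_I}(P_t)$, and Theorem \ref{characterization1} then delivers $\gamma_{_I}(G\circ_v P_t)=n(G)\gamma_{_I}(P_t)=n(G)\lceil (t+1)/2\rceil$.

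For $v\in L(P_t)$, say $v=v_1$, we have $P_t-\{v\}\cong P_{t-1}$ and $\gamma_{_I}(P_{t-1})=\lceil t/2\rceil$. If $t$ is odd then $\gamma_{_I}(P_{t-1})=(t+1)/2=\gamma_{_I}(P_t)$, and Theorem \ref{characterization1} once more yields $\gamma_{_I}(G\circ_v P_t)=n(G)\lceil (t+1)/2\rceil$. If $t$ is even then $\gamma_{_I}(P_{t-1})=t/2=\gamma_{_I}(P_t)-1$, and Theorem \ref{Characterization1.1} narrows $\gamma_{_I}(G\circ_v P_t)$ to either $n(G)\cdot t/2+\gamma(G)$ or $n(G)\cdot t/2+\gamma_{_I}(G)$. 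To force the first, I would verify condition (i) of Theorem \ref{characterization2} by exhibiting the function on $P_{t-1}=v_2v_3\cdots v_t$ defined by $g(v_j)=1$ for even $j$ and $g(v_j)=0$ otherwise; this is an IDF of weight $t/2=\gamma_{_I}(P_{t-1})$ with $g(v_2)>0$, where $v_2$ is the unique neighbour of $v=v_1$ in $P_t$.

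The only delicate point I expect is the side hypotheses $\delta_{\max}(G)\ge 2$ in Theorem \ref{Characterization1.1} and $\gamma_{_I}(G)>\gamma(G)$ in Theorem \ref{characterization2}. If $\gamma_{_I}(G)=\gamma(G)$ the two candidate values in Theorem \ref{Characterization1.1} already coincide, so there is nothing to check; if $\delta_{\max}(G)\le 1$ then $\gamma_{_I}(G)=n(G)$ by Remark \ref{Igualdad-al-orden}, the two upper options in the trichotomy of Theorem \ref{teo-bounds-rooted} collapse, and the direct construction that places the function $g$ above on every copy of $P_{t-1}$ and assigns value $1$ to a $\gamma(G)$-set still realises the weight $n(G)\cdot t/2+\gamma(G)$, which is then forced by the trichotomy. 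Beyond these bookkeeping issues, the parity case analysis is the only real computation.
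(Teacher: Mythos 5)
Your proposal is correct and follows essentially the same route as the paper, which simply cites Theorems \ref{characterization1} and \ref{characterization2} after (implicitly) computing $\gamma_{_I}(P_t-\{v\})$; you have filled in the parity computations, the explicit witness for condition (i), and the degenerate cases $\gamma_{_I}(G)=\gamma(G)$ and $\delta_{\max}(G)\le 1$ that the paper's two-line proof glosses over. One small remark: for the cases resolved via Theorem \ref{characterization1} you never discharge its hypothesis $\delta_{\max}(G)\ge 2$, but the direction you actually use ($\gamma_{_I}(H-\{v\})\ge\gamma_{_I}(H)$ implies $\gamma_{_I}(G\circ_v H)=n(G)\gamma_{_I}(H)$, via Lemma \ref{LemmaBf-empty} and the trichotomy) does not depend on that hypothesis, so no harm is done.
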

\begin{proof}
The case $t\equiv 1  \pmod 2$ for any $v$ is deduced from Theorem \ref{characterization1}, while the case $t\equiv 0  \pmod 2$ for $v\in V(P_t)\setminus L(P_t)$ is deduced from Theorem \ref{characterization2}. 
\end{proof}

From Theorems \ref{Characterization1.1} and \ref{characterization2} we inmediately have the following result. 

\begin{theorem}\label{characterization3}
 Let $G$ be a graph of maximum degree  $\delta_{\max}(G)\ge 2$  with $\gamma(G)<\gamma_{_I}(G)$. For any graph $H$ and any vertex $v\in V(H)$,  $\gamma_{_I}(G\circ_v H)=n(G)(\gamma_{_I}(H)-1)+\gamma_{_I}(G)$ if and only if  $\gamma_{_I}(H-\{v\})=\gamma_{_I}(H)-1$ and the following conditions hold:
\begin{itemize}
 \item [(i)] For every $\gamma_{_I}(H-\{v\})$-function $g$, $g(y)=0$ for every $y\in N(v)$.
 \item [(ii)] For every $\gamma_{_I}(H)$-function $h$, $h(v)\neq 2$.
\end{itemize}
\end{theorem}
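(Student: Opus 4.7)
The plan is to derive Theorem \ref{characterization3} as an essentially immediate consequence of Theorems \ref{Characterization1.1} and \ref{characterization2}, using the hypothesis $\gamma(G)<\gamma_{_I}(G)$ to separate the two possible values that arise in the conclusion of Theorem \ref{Characterization1.1}. The first thing I would record is that, under the standing assumptions $\delta_{\max}(G)\ge 2$ and $\gamma(G)<\gamma_{_I}(G)$, Remark \ref{Igualdad-al-orden} gives $\gamma_{_I}(G)<n(G)$, hence the three candidate values in the Trichotomy of Theorem \ref{teo-bounds-rooted}, namely $n(G)(\gamma_{_I}(H)-1)+\gamma(G)$, $n(G)(\gamma_{_I}(H)-1)+\gamma_{_I}(G)$ and $n(G)\gamma_{_I}(H)$, are pairwise distinct. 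This distinctness is what permits us to read off a unique value from Theorem \ref{Characterization1.1} and to separate cases cleanly.

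For the forward direction, assume $\gamma_{_I}(G\circ_v H)=n(G)(\gamma_{_I}(H)-1)+\gamma_{_I}(G)$. Since this value differs from $n(G)\gamma_{_I}(H)$, Theorem \ref{Characterization1.1} yields $\gamma_{_I}(H-\{v\})=\gamma_{_I}(H)-1$. Since the value also differs from $n(G)(\gamma_{_I}(H)-1)+\gamma(G)$, the biconditional in Theorem \ref{characterization2} forces the conjunction on its right-hand side to fail; as $\gamma_{_I}(H-\{v\})=\gamma_{_I}(H)-1$ is already known to hold, the only way for that conjunction to fail is that neither of the existential conditions (i) nor (ii) of Theorem \ref{characterization2} is satisfied. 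The negations of those two existential conditions are exactly conditions (i) and (ii) of the present theorem.

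For the reverse direction, assume $\gamma_{_I}(H-\{v\})=\gamma_{_I}(H)-1$ together with (i) and (ii). Theorem \ref{Characterization1.1} places $\gamma_{_I}(G\circ_v H)$ into the set
\[
\{\,n(G)(\gamma_{_I}(H)-1)+\gamma(G),\; n(G)(\gamma_{_I}(H)-1)+\gamma_{_I}(G)\,\}.
\]
The assumed (i) and (ii) are precisely the negations of the existential clauses (i) and (ii) in Theorem \ref{characterization2}, so by the contrapositive of that theorem $\gamma_{_I}(G\circ_v H)\neq n(G)(\gamma_{_I}(H)-1)+\gamma(G)$, and therefore $\gamma_{_I}(G\circ_v H)=n(G)(\gamma_{_I}(H)-1)+\gamma_{_I}(G)$, as required.

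No step here presents a real obstacle; the only point requiring care is the bookkeeping that ensures the three Trichotomy values are genuinely distinct (so that the equivalences in Theorems \ref{Characterization1.1} and \ref{characterization2} can be applied as logical equivalences rather than as one-way implications), and this is secured by the hypotheses $\delta_{\max}(G)\ge 2$ and $\gamma(G)<\gamma_{_I}(G)$. The rest is a formal translation between ``there exists a function with property $P$'' and its negation ``every such function fails $P$.''
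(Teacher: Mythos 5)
Your proposal is correct and follows exactly the route the paper intends: the paper states this theorem as an immediate consequence of Theorems \ref{Characterization1.1} and \ref{characterization2} without writing out the deduction, and your argument is precisely that deduction, including the necessary observation that the hypotheses $\delta_{\max}(G)\ge 2$ and $\gamma(G)<\gamma_{_I}(G)$ make the three Trichotomy values pairwise distinct.
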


%We now consider some particular cases of $G$ and $H$. 

\begin{theorem}
Let $G$ be a graph with $\delta_{\max}(G)\ge 2$,   $H$ a  graph and $u\in V(H)$. If $f(u)=2$ for every $\gamma_{_I}(H)$-function $f$,  then for every $v\in N(u)$, $$\gamma_{_I}(G\circ_v H)=n(G)\gamma_{_I}(H).$$
\end{theorem}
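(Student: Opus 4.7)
The plan is to apply Theorem \ref{characterization1}. Since $\delta_{\max}(G) \geq 2$, that theorem reduces the claim $\gamma_{_I}(G \circ_v H) = n(G)\gamma_{_I}(H)$ to the single inequality $\gamma_{_I}(H - \{v\}) \geq \gamma_{_I}(H)$. Combined with Lemma \ref{m-1}, which always guarantees $\gamma_{_I}(H - \{v\}) \geq \gamma_{_I}(H) - 1$, the only way this inequality can fail is $\gamma_{_I}(H - \{v\}) = \gamma_{_I}(H) - 1$, and my strategy is to rule that case out by contradiction.

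Assume, then, that $\gamma_{_I}(H - \{v\}) = \gamma_{_I}(H) - 1$, and fix a $\gamma_{_I}(H - \{v\})$-function $g$. Extending $g$ to a function $\tilde g$ on $H$ by setting $\tilde g(v) = 1$ and $\tilde g(w) = g(w)$ for every $w \neq v$ produces an IDF on $H$ of weight exactly $\gamma_{_I}(H)$, hence a $\gamma_{_I}(H)$-function: the only new constraint to check is at vertices $w \neq v$ with $g(w)=0$, whose IDF-witness is inherited from $H - \{v\}$, while $v$ itself is given positive value. By the hypothesis of the theorem, this $\gamma_{_I}(H)$-function must satisfy $\tilde g(u) = 2$, and therefore $g(u) = 2$.

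Next, I would exhibit an IDF on $H$ of weight $\gamma_{_I}(H) - 1$, contradicting the definition of $\gamma_{_I}(H)$. Define $f'$ on $H$ by $f'(v) = 0$ and $f'(w) = g(w)$ for every $w \neq v$; its weight is $\gamma_{_I}(H - \{v\}) = \gamma_{_I}(H) - 1$. The vertex $v$ is IDF-covered because $u \in N(v)$ and $f'(u) = g(u) = 2$, while every other vertex $w$ with $f'(w) = 0$ satisfies
\[
\sum_{y \in N_H(w)} f'(y) \;\geq\; \sum_{y \in N_{H - \{v\}}(w)} g(y) \;\geq\; 2,
\]
the witness simply being inherited from $g$ on $H - \{v\}$. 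The only real obstacle is the conceptual step of recognizing that the hypothesis lets us demote $v$ from weight $1$ to weight $0$ for free, since its neighbor $u$ already carries a full load of $2$; no delicate calculation or case analysis is needed beyond this observation.
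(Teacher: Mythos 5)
Your proof is correct and follows essentially the same route as the paper: reduce via Theorem \ref{characterization1} and Lemma \ref{m-1} to the case $\gamma_{_I}(H-\{v\})=\gamma_{_I}(H)-1$, then use a $\gamma_{_I}(H-\{v\})$-function $g$ to reach a contradiction. The paper splits into the cases $g(u)=2$ and $g(u)\le 1$, whereas you first force $g(u)=2$ by extending $g$ to a $\gamma_{_I}(H)$-function and then demote $v$ to $0$; this is the same argument arranged as a single chain.
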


\begin{proof}
Assume that $f(u)=2$ for every $\gamma_{_I}(H)$-function $f$, and let $v\in N(u)$. Suppose  to the contrary that $\gamma_{_I}(G\circ_v H)\neq n(G)\gamma_{_I}(H)$. In such a case, by Theorem \ref{characterization1} and Lemma~\ref{m-1} 
we conclude that $\gamma_{_I}(H-\{v\})=\gamma_{_I}(H)-1$.   Let $g$ be a $\gamma_{_I}(H-\{v\})$-function. If $g(u)=2$, then we define a  function $h$ on $H$ such that $h(w)=g(w)$ for every $w\neq v$ and $h(v)=0$. 
Observe that $h$ is an IDF on $H$  with $\omega(h)=\omega(g)=\gamma_{_I}(H)-1$, which is a contradiction. If $g(u)\leq 1$, then we define a function $h$ on $H$ such that $h(w)=g(w)$ if $w\neq v$ and $h(v)=1$. In this case,  $h$ is a $\gamma_{_I}(H)$-function with $h(u)\neq 2$, which is a contradiction. Therefore, $\gamma_{_I}(G\circ_v H)=n(G)\gamma_{_I}(H)$.
\end{proof}

The next theorem considers the case in which the root of $H$ is a strong support vertex. 
A leaf of a graph $H$ is a vertex of degree one while a strong support vertex of $H$ is a vertex adjacent to at least two leaves. We denote the set of leaves of $H$ as $L(H)$ and the set of strong support vertices of $H$ as $S(H)$.

\begin{theorem}
Let $G$ and $H$ be two  graphs. If $v\in S(H)$ then $$\gamma_{_I}(G\circ_v H)=n(G)\gamma_{_I}(H).$$
\end{theorem}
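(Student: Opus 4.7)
The plan is to avoid any direct construction of an IDF on $G\circ_v H$ and instead reduce the statement to the single inequality $\gamma_{_I}(H-\{v\})\ge \gamma_{_I}(H)$. Once this is established, Lemma \ref{LemmaBf-empty} guarantees (via its contrapositive, combined with Lemma \ref{m-1}) that $\mathcal{B}_f=\emptyset$ for every $\gamma_{_I}(G\circ_v H)$-function $f$, since the existence of a vertex in $\mathcal{B}_f$ would force $\gamma_{_I}(H-\{v\})=\gamma_{_I}(H)-1$. Then the trichotomy of Theorem \ref{teo-bounds-rooted} immediately yields $\gamma_{_I}(G\circ_v H)=n(G)\gamma_{_I}(H)$. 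This route has the virtue of covering every graph $G$ without having to split cases based on $\delta_{\max}(G)$.

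To establish $\gamma_{_I}(H-\{v\})\ge \gamma_{_I}(H)$ when $v\in S(H)$, I would take an arbitrary $\gamma_{_I}(H-\{v\})$-function $g$ and fix two leaves $\ell_1,\ell_2\in L(H)\cap N_H(v)$, which exist by definition of $S(H)$. The crucial observation is that $\ell_1$ and $\ell_2$ become isolated in $H-\{v\}$, since $v$ is their unique neighbour in $H$. An IDF is forced to assign value at least $1$ to every isolated vertex, for otherwise the defining condition $g(N(\cdot))\ge 2$ cannot hold on an empty neighbourhood. Consequently $g(\ell_1)\ge 1$ and $g(\ell_2)\ge 1$.

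The final step is to extend $g$ to an IDF on $H$ of the same weight. I would define $h\colon V(H)\to\{0,1,2\}$ by $h(v)=0$ and $h(w)=g(w)$ for every $w\ne v$. At $v$ the IDF condition is satisfied because $h(N_H(v))\ge g(\ell_1)+g(\ell_2)\ge 2$, and for every other vertex $w$ with $h(w)=0$ we have $h(N_H(w))\ge g(N_{H-\{v\}}(w))\ge 2$, since enlarging the neighbourhood by $v$ contributes only $h(v)=0$. Hence $h$ is an IDF on $H$ with $\omega(h)=\omega(g)=\gamma_{_I}(H-\{v\})$, which gives $\gamma_{_I}(H)\le \gamma_{_I}(H-\{v\})$ as required.

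No step presents a serious obstacle; the one small subtlety worth highlighting is the fact that under an IDF an isolated vertex must receive positive value, because the Italian condition otherwise fails vacuously. Once this is noted, the extension argument and the appeal to the trichotomy close the proof.
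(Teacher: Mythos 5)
Your proof is correct and its core is the same as the paper's: both arguments hinge on the observation that the two leaves in $N(v)\cap L(H)$ become isolated in $H-\{v\}$, hence must receive positive weight under any IDF, so that any $\gamma_{_I}(H-\{v\})$-function extends to an IDF on $H$ of the same weight by assigning $0$ to $v$, giving $\gamma_{_I}(H-\{v\})\ge \gamma_{_I}(H)$. The only difference is the wrapper: the paper concludes via Theorem \ref{characterization1}, which is stated under the hypothesis $\delta_{\max}(G)\ge 2$, whereas you conclude via Lemma \ref{LemmaBf-empty} (with Lemma \ref{m-1}) and the trichotomy of Theorem \ref{teo-bounds-rooted}, which needs no assumption on $G$; your route therefore covers the cases $\delta_{\max}(G)\le 1$ that the paper's citation technically leaves out, and is the more careful of the two.
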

\begin{proof}
By Theorem \ref{characterization1}, it is enough to show that $\gamma_{_I}(H-\{v\})\geq\gamma_{_I}(H)$. Notice that for any $\gamma_{_I}(H-\{v\})$-function $g$ and any $u\in L(H)\cap N(v)$  we have that $g(u)=1$. Since $|N(v)\cap L(H)|\ge 2$, the function $f$ defined  on $H$ as $f(v)=0$ and $f(w)=g(w)$ if $w\in V(H)-\{v\}$ is an IDF on $H$ concluding that $\gamma_{_I}(H-\{v\})\geq \omega(g)=\omega(f)=\gamma_{_I}(H)$, as required.
\end{proof}

\begin{theorem}\label{forcedf}
Let $G$ be a graph with $\delta_{\max}(G)\ge 2$, $H$ a  graph and $v\in V(H)$.  If $g(v)\neq 1$ for every $\gamma_{_I}(H)$-function $g$,  then $$\gamma_{_I}(G\circ_v H)=n(G)\gamma_{_I}(H).$$
\end{theorem}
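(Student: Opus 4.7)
The plan is to reduce the claim to Theorem \ref{characterization1}. Since $\delta_{\max}(G) \geq 2$, that theorem tells us $\gamma_{_I}(G \circ_v H) = n(G)\gamma_{_I}(H)$ if and only if $\gamma_{_I}(H - \{v\}) \geq \gamma_{_I}(H)$. So the whole task becomes showing that the hypothesis ``$g(v) \neq 1$ for every $\gamma_{_I}(H)$-function $g$'' forces $\gamma_{_I}(H - \{v\}) \geq \gamma_{_I}(H)$.

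I would argue this by contradiction. By Lemma \ref{m-1} we always have $\gamma_{_I}(H - \{v\}) \geq \gamma_{_I}(H) - 1$, so if the desired inequality fails, the only possibility is $\gamma_{_I}(H - \{v\}) = \gamma_{_I}(H) - 1$. I would then take any $\gamma_{_I}(H - \{v\})$-function $g$ and lift it to a function $f$ on $H$ by setting $f(v) = 1$ and $f(w) = g(w)$ for every $w \in V(H) \setminus \{v\}$.

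The next step is to check that $f$ is an IDF on $H$. This is routine: $v$ itself gets positive weight, so it imposes no constraint; and any vertex $w$ with $f(w) = 0$ lies in $V(H - \{v\})$ with $g(w) = 0$, whence $f(N_H(w)) \geq g(N_{H-\{v\}}(w)) \geq 2$ because $g$ is an IDF on $H - \{v\}$. Since $\omega(f) = \omega(g) + 1 = \gamma_{_I}(H)$, the function $f$ is a $\gamma_{_I}(H)$-function with $f(v) = 1$, contradicting the hypothesis. Hence $\gamma_{_I}(H - \{v\}) \geq \gamma_{_I}(H)$, and Theorem \ref{characterization1} closes the argument.

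The proof is short and I do not expect any real obstacle; the only subtle point is remembering to invoke Lemma \ref{m-1} to narrow the contradiction hypothesis down to the single value $\gamma_{_I}(H) - 1$, which is exactly what makes the lifted function $f$ have weight $\gamma_{_I}(H)$ rather than something larger.
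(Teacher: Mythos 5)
Your proof is correct and follows essentially the same route as the paper: invoke Lemma \ref{m-1} and Theorem \ref{characterization1} to reduce to the case $\gamma_{_I}(H-\{v\})=\gamma_{_I}(H)-1$, then lift a $\gamma_{_I}(H-\{v\})$-function to a $\gamma_{_I}(H)$-function assigning $1$ to $v$, contradicting the hypothesis. The only difference is cosmetic (you phrase the contradiction around $\gamma_{_I}(H-\{v\})$ rather than around $\gamma_{_I}(G\circ_v H)$), and you verify the IDF property of the lifted function slightly more explicitly than the paper does.
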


\begin{proof}
Assume that $g(v)\neq 1$ for every $\gamma_{_I}(H)$-function $g$, and suppose that $\gamma_{_I}(G\circ_v H)\neq n(G)\gamma_{_I}(H)$. By Lemma \ref{m-1} and  Theorem \ref{characterization1} we have that $\gamma_{_I}(H-\{v\})=\gamma_{_I}(H)-1$. Let $f$ be a $\gamma_{_I}(H-\{v\})$-function and consider the function $h$ on $H$ such that $h(v)=1$ and $h(u)=f(u)$ for every $u\ne v$. Notice that $h$ is a $\gamma_{_I}(H)$-function on $H$ with $h(v)=1$, which is a contradiction. Therefore, $\gamma_{_I}(G\circ_v H)=n(G)\gamma_{_I}(H)$.
\end{proof}

\section{The case of corona graphs}

Given two graphs $G$  and $H$, the corona product $G\odot H$ is defined as the graph obtained from $G$ and $H$ by taking one copy of $G$ and $n(G)$ copies of $H$ and joining by an edge each vertex of the $i^{th}$ copy of $H$ with the  $i^{th}$ vertex of $G$  for each $i\in \{1,\dots,n(G)\}$.

The join $G + H$ is defined as the graph obtained from disjoint graphs $G$ and $H$ by taking
one copy of $G$ and one copy of $H$ and joining by an edge each vertex of $G$ with each vertex of $H$. Notice that the corona product graph
$K_1\odot H$ is isomorphic to the join graph $K_1 + H$. Furthermore, any corona product graph 
 $G\odot H$ can be seen as a rooted product, i.e.,  $$G\odot H\cong G\circ_v(K_1+H),$$ where $v$ is the vertex of $K_1$.
 Since $\gamma_{_I}(K_1+ H)=2$, by Theorem \ref{RemarkCase=2paraH} we deduce the following result.
 
\begin{corollary}
For any graph  $G$ and any graph $H$,
 $$\gamma_{_I}(G\odot H)= \left \{ \begin{array}{ll}  n(G)+\gamma(G), & \text{ if }  H\cong K_1;\\
\\ 2n(G), & \text{otherwise.}
\end{array}\right.$$
\end{corollary}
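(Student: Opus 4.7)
The strategy is to leverage the isomorphism $G\odot H\cong G\circ_v(K_1+H)$ noted in the paragraph preceding the statement, so that the corollary becomes a direct instance of Theorem \ref{RemarkCase=2paraH}. The plan is thus to (a) confirm $\gamma_{_I}(K_1+H)=2$ for every graph $H$, and (b) identify which of the three cases of Theorem \ref{RemarkCase=2paraH} applies according to whether $H\cong K_1$ or not.

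For step (a), I would first observe that assigning weight $2$ to the vertex of $K_1$ (call it $v$) yields an IDF on $K_1+H$ of weight $2$ because $v$ dominates every vertex of $H$, giving the upper bound $\gamma_{_I}(K_1+H)\le 2$. For the matching lower bound, I would note that an IDF of weight $1$ is impossible on any graph with at least one edge and no isolated vertex in a suitable sense; concretely, if some vertex $w$ received the unique positive label $1$, then any vertex outside $N[w]$ (or $w$ itself, in certain configurations) would violate the IDF condition. Since $K_1+H$ has at least two vertices and $v$ is adjacent to every other vertex, no single label of $1$ can satisfy the constraint at both $v$ and the vertices of $H$. Hence $\gamma_{_I}(K_1+H)=2$.

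For step (b), I would split on $H$. If $H\cong K_1$, then $K_1+H\cong K_2$, so Theorem \ref{RemarkCase=2paraH} immediately yields $\gamma_{_I}(G\odot H)=n(G)+\gamma(G)$. If $H\not\cong K_1$, then $K_1+H$ has at least three vertices, and it is connected because $v$ is adjacent to every vertex of $H$; in particular $K_1+H$ is isomorphic to neither $K_2$ nor $\overline{K_2}$. The third clause of Theorem \ref{RemarkCase=2paraH} then gives $\gamma_{_I}(G\odot H)=2n(G)$.

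I do not expect any real obstacle here: the only non-mechanical step is verifying $\gamma_{_I}(K_1+H)=2$, and even that is just a one-line argument about why a weight-$1$ IDF cannot exist on $K_1+H$. The rest is bookkeeping, reading off the appropriate branch of Theorem \ref{RemarkCase=2paraH}.
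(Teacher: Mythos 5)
Your proposal is correct and follows exactly the route the paper takes: it uses the isomorphism $G\odot H\cong G\circ_v(K_1+H)$, verifies $\gamma_{_I}(K_1+H)=2$, and reads off the appropriate branch of Theorem \ref{RemarkCase=2paraH}, noting that $K_1+H\cong K_2$ precisely when $H\cong K_1$ and that $K_1+H\not\cong\overline{K_2}$ otherwise. Your explicit justification of $\gamma_{_I}(K_1+H)=2$ is a small addition the paper leaves implicit, but the argument is the same.
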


\section{NP-Hardness}

Given a positive integer $k$ and a graph $G$, the  problem of deciding if  $G$ has an Italian dominating  function   $f$ of weight  $\omega(f)\le k$  is NP-complete \cite{CHELLALI201622}. Therefore, the  problem of computing the Italian domination number of a graph is NP-hard. In this section we will show an alternative way of reaching this conclusion.

Recently some authors have  shown how graph products can become  useful tools to show that some optimization problems are NP-hard.  For instance,  Fernau and Rodr\'{i}guez-Vel\'{a}zquez \cite{RV-F-2013,MR3218546}  have shown that the corona product of two graphs can be used to derive NP-hardness results on the (local) metric dimension, based on known NP-hardness results on the (local) adjacency dimension. In the same direction,  Dettlaff et al.\ \cite{Dettlaff-LemanskaRodrZuazua2017} have shown how we can use the   lexicographic product of two graphs  to deduce an NP-hardness result on the super domination number, from a well-known NP-hardness result on the independence number of a graph. 
In Theorem \ref{NP-hardTh} we will show that we can use the rooted product of two graphs to study the computational complexity of the problem of finding the Italian domination number of a graph. 
In this case, we will use Corollary \ref{The cae H=K_2} and the fact  that the problem of computing the domination number of a graph is NP-hard., i.e., given a positive integer $k$ and a graph $G$, the problem of deciding if $G$ has a dominating set $D$ of cardinality $|D|\le k$  is NP-complete \cite{Garey1979}, which implies that the optimization problem of computing the domination number of a graph is NP-hard.

\begin{theorem}\label{NP-hardTh}
The problem of computing the Italian domination number of a graph is NP-hard.
\end{theorem}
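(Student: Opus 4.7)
The plan is to give a polynomial-time reduction from the domination number problem (known to be NP-hard) to the Italian domination number problem, using the rooted product construction $G \circ_v K_2$ and the closed formula already obtained in Corollary \ref{The cae H=K_2}.

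First, I would recall the decision version: given a graph $G$ and a positive integer $k$, deciding whether $\gamma(G)\le k$ is NP-complete. The reduction takes an arbitrary instance $(G,k)$ of this problem and produces the instance $(G\circ_v K_2,\; k+n(G))$ of the corresponding decision problem for Italian domination. The construction of $G\circ_v K_2$ is explicit: take a copy of $G$ and attach one pendant vertex to each vertex of $G$. This can clearly be done in polynomial time (indeed, linear in $n(G)$), so the size of the new instance is polynomial in the size of the original.

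Next, I would invoke Corollary \ref{The cae H=K_2}, which states that for any graph $G$ and $v\in V(K_2)$,
\begin{equation*}
\gamma_{_I}(G\circ_v K_2)=n(G)+\gamma(G).
\end{equation*}
From this identity the correctness of the reduction is immediate: $\gamma(G)\le k$ if and only if $\gamma_{_I}(G\circ_v K_2)\le n(G)+k$. Hence any algorithm that decides the Italian domination problem on $G\circ_v K_2$ in time $T$ decides the domination problem on $G$ in time $T$ plus the polynomial overhead of building $G\circ_v K_2$. Therefore the Italian domination decision problem is NP-hard, and consequently the optimization problem of computing $\gamma_{_I}$ is NP-hard as well.

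There is no real technical obstacle in this plan since the heavy lifting has been carried out in Corollary \ref{The cae H=K_2}; the only point worth emphasizing explicitly in the write-up is that the transformation $(G,k)\mapsto (G\circ_v K_2,\,k+n(G))$ is computable in polynomial time and that the equivalence of the two decision instances follows at once from the closed formula. This is precisely the style of reduction advertised in the introduction to the section, where rooted products are highlighted as a tool for transferring known NP-hardness results to new graph parameters.
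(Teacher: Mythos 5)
Your proposal is correct and follows exactly the paper's argument: both reduce the NP-hard domination number problem to Italian domination via the rooted product $G\circ_v K_2$ and the formula $\gamma_{_I}(G\circ_v K_2)=n(G)+\gamma(G)$ from Corollary \ref{The cae H=K_2}. Your write-up is in fact somewhat more careful than the paper's, since you spell out the decision-problem instances and the polynomial-time computability of the transformation, but the underlying idea is the same.
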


\begin{proof}
 By Corollary \ref{The cae H=K_2}, for any graph $G$   we have that $$\gamma_{_I}(G\circ_v K_2)=n(G)+\gamma(G),$$ where $v$ is a leaf of $K_2$. Hence, the problem of computing $\gamma(G)$ is equivalent to the problem of finding $\gamma_{_I}(G\circ_v K_2)$, which implies that 
 the  problem of computing the Italian domination number of a graph is NP-hard.
\end{proof}

\end{document}